\documentclass[a4paper, 12pt,reqno]{amsart}

\usepackage[margin=1in]{geometry, stmaryrd}
\usepackage{dirtytalk}
\usepackage{bigints}
\usepackage{amssymb,latexsym}
\usepackage{graphicx}
\usepackage{float}
\graphicspath{ {./images/} }
\usepackage{mathtools}
\usepackage{color}
\usepackage[hyphens]{url}
\usepackage[T1]{fontenc}
\usepackage{hyperref}
\usepackage{amsmath}
\usepackage{mathdots}
\usepackage{breqn}
\usepackage[toc,page]{appendix}
\usepackage{url}    
\usepackage{breqn}
\usepackage{breakurl}
\usepackage{comment}
\usepackage{thmtools}
\usepackage{thm-restate}
\usepackage{mathrsfs}
\usepackage{fancyhdr} 
\usepackage{placeins}
\usepackage{leftidx}
\usepackage{bbm, dsfont}
\allowdisplaybreaks
\makeatletter
\def\imod#1{\allowbreak\mkern10mu({\operator@font mod}\,\,#1)}
\makeatother
\theoremstyle{plain}
\numberwithin{equation}{section}
\newtheorem{thm}{Theorem}[section]
\newtheorem*{theorem*}{Theorem}
\newtheorem{theorem}[thm]{Theorem}
\newtheorem{lemma}[thm]{Lemma}
\newtheorem{corollary}[thm]{Corollary}

\newtheorem{definition}[thm]{Definition}

\newtheorem{proposition}[thm]{Proposition}

\newtheorem{remark}[thm]{Remark}

\newcommand{\bigslant}[2]{{\raisebox{.2em}{$#1$}\left/\raisebox{-.2em}{$#2$}\right.}}
\newcommand{\sfrac}{\genfrac{}{}{}1}

\DeclareMathOperator{\GL}{GL}

\DeclareMathOperator{\Su}{Supp}
\DeclareMathOperator{\Hom}{Hom}
\DeclareMathOperator{\N}{N}
\DeclareMathOperator{\I}{I}
\DeclareMathOperator{\E}{E}
\DeclareMathOperator{\F}{F}

\DeclareMathOperator{\K}{K}

\DeclareMathOperator{\D}{D}
\DeclareMathOperator{\J}{J}

\DeclareMathOperator{\T}{T}

\DeclareMathOperator{\Mat}{Mat}
\DeclareMathOperator{\Ind}{Ind}
\DeclareMathOperator{\ind}{ind}

\title{D\lowercase{istinguished} S\lowercase{imple} S\lowercase{upercuspidal} R\lowercase{epresentations} \lowercase{of} $p$-\lowercase{adic} $\GL(n)$}

\author{D\lowercase{avid} C.~L\lowercase{uo}}

\address{School of Mathematics, University of Minnesota, Minneapolis, MN 55455, United States} \email{luo00275@umn.edu}

\subjclass{22E50, 11F70}

\begin{document}

\begin{abstract}
    Let $\E/\F$ be an unramified quadratic extension of non-archimedean local fields with odd residual characteristic.~In this paper, we give equivalent conditions for a simple supercuspidal representation $\pi$ of $\GL(n, \E)$ to be distinguished by $\GL(n, \F)$ in terms of its defining maximal simple type and twisted gamma factors.~Furthermore, we prove that the collection of twisted gamma factors evaluated at $\sfrac{1}{2}$ between $\pi$ and all unitary, tamely ramified quasi-characters of $\E^{\times}$ that are trivial on $\F^{\times}$ is sufficient to determine whether $\pi$ is distinguished by $\GL(n, \F)$.
\end{abstract}

\maketitle


\section{Introduction}

Let $\E/\F$ be an unramified quadratic extension of non-archimedean local fields with odd residual characteristic $p$.~Given a irreducible smooth complex representation $\tau$ of the general linear group $\GL(n,\E)$, one may ask whether $\tau$ admits a non-zero $\GL(n,\F)$-invariant linear form.~More precisely, one asks whether the space
\[
\Hom_{\GL(n, \, \F)}(\tau,\mathbbm{1})
\]
is non-zero, where $\mathbbm{1}$ denotes the trivial quasi-character of $\GL(n, \F)$.~If $\tau$ possesses such a linear form, we say that it is \textit{distinguished} by $\GL(n,\F)$.

The study of this question can be traced back to the 1980s.~The works 
\cite{Harder-Langlands-Rapoport, Jacquet-Ye} describe the initial motivation for identifying distinguished representations in the global setting, while the papers \cite{Flicker, Hakim} discuss applications in the non-archimedean local setting involving functoriality transfers from unitary groups \cite{Secherre}.~In 2019, S\'echerre gave explicit criteria for a complex \textit{$\sigma$-self-dual} supercuspidal representation $\pi$ (and likewise for $\sigma$-self-dual $\ell$-modular cuspidal representations with $\ell \neq p$) to be distinguished by $\GL(n,\F)$, in terms of invariants attached to its associated maximal simple type \cite[Theorem~10.9]{Secherre}.

Restricting to complex supercuspidal representations, one can also detect distinction by $\GL(n, \F)$ using \textit{twisted gamma factors}.~Let $\psi_{\E}$ be a non-trivial additive quasi-character of $\E$ that is trivial on $\F$.~Then to any pair of supercuspidal representations $\pi$ and $\tau$ of $\GL(n,\E)$, we can attach a corresponding twisted gamma factor $\gamma(s, \, \pi \times \tau, \, \psi_{\E})$, which may be defined via the Rankin--Selberg convolution integrals \cite{JPSS} or via the Langlands--Shahidi method \cite{Shahidi}.~The history of this approach is summarized in \cite[pp.~202--204]{Aubert}.

In 2015, Hakim and Offen gave an equivalent condition for supercuspidal representations of $\GL(n, \E)$ to be distinguished by $\GL(n, \F)$ in terms of these gamma factors \cite[Theorem 1.5.1]{Hakim-Offen}:
\begin{theorem}[Hakim--Offen, 2015]\label{n-2}
    Let $\pi$ be a supercuspidal representation of $\GL(n, \E)$ ($n \geq 3$) with central character trivial on $\F^{\times}$.~Then $\pi$ is distinguished by $\GL(n, \F)$ if and only if 
    \[
    \gamma\left(\sfrac{1}{2},\, \pi \times \tau, \, \psi_{\E}\right) = 1
    \] 
    for all irreducible generic representations $\tau$ of $\GL(r,\E)$ that are distinguished by $\GL(r,\F)$, where $1 \leq r \leq n-2$.
\end{theorem}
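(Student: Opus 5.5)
This is the Hakim--Offen characterization, so the plan follows their strategy: connect distinction to the local Asai (or exterior-square-type) $L$- and $\gamma$-factors, and then to Rankin--Selberg functional equations for distinguished pairs. I would use two known inputs as black boxes. The first is Kable and Anandavardhanan--Kable--Tandon: a supercuspidal $\pi$ of $\GL(n,\E)$ is distinguished by $\GL(n,\F)$ if and only if the Asai $L$-factor $L(s,\pi,\mathrm{As})$ has a pole at $s=0$. The second is Flicker's theorem: distinguished supercuspidals are exactly those $\sigma$-self-dual $\pi$ (i.e.\ $\pi^\vee\cong\pi^\sigma$) with a specific sign of the Asai factor. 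The central character hypothesis rules out the twisted ("$\eta$-distinguished") alternative when $\pi^\vee\cong\pi^\sigma$.

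\emph{Forward direction.} Suppose $\pi$ is distinguished and $\tau$ is a generic representation of $\GL(r,\E)$ distinguished by $\GL(r,\F)$, with $r\le n-2$. I would realise $\gamma(s,\pi\times\tau,\psi_\E)$ through the JPSS zeta integrals $\Psi(s,W,W')$ with $W\in\mathcal W(\pi,\psi_\E)$ and $W'\in\mathcal W(\tau,\psi_\E^{-1})$. The choice of $\psi_\E$ trivial on $\F$ matters here: it lets one build Whittaker functions from the $\GL(n,\F)$-invariant functional on $\pi$ via the Flicker/Ok-type formula $\ell(W)=\int_{N_n(\F)\backslash P_n(\F)}W(p)\,dp$, and similarly for $\tau$. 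One then "unfolds" over $\F$-points: summing the integral over an $\F$-structure gives an identity of the shape $\Psi(s,W,W')$ versus $\Psi(1-s,\tilde W,\tilde W')$ at $s=\tfrac12$, whose $\sigma$-conjugation symmetry forces $\gamma(\tfrac12)^{\sigma}\gamma(\tfrac12)=1$ together with a sign. The sign is pinned to $+1$ because both invariant functionals are nonzero on the relevant vectors. Equivalently, and more cleanly, Offen's result $\gamma(\tfrac12,\pi\times\tau,\psi_\E)=1$ for distinguished pairs can be derived from the global method: embed $\pi$ and $\tau$ as local components of cuspidal distinguished automorphic representations, using Hakim--Murnaghan globalization of distinguished supercuspidals. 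Then compare the global period identity $\int \phi\,\phi'=$ product of local terms with the global functional equation, at all places except $v_0$. The other places are unramified or archimedean, and there $\gamma=1$ is computed by hand.

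\emph{Converse.} This is the main obstacle. I would argue by contraposition using the Asai factor. Assume $\gamma(\tfrac12,\pi\times\tau,\psi_\E)=1$ for every distinguished generic $\tau$ with $r\le n-2$. Globalize $\pi$ to a cuspidal $\Pi$ over a quadratic extension of number fields with trivial central character on $\mathbb A_F^\times$, with the other places controlled. Then run the converse-theorem machinery of Cogdell--Piatetski-Shapiro restricted to distinguished twists. The ($n-2$)-converse theorem shows that the twisted Asai-type Poincar\'e series built from $\Pi$ has a nonvanishing $\GL(n,\mathbb A_F)$-period. The key technical point is an analogue of Jacquet's "$n-2$" sufficiency for Bessel/Howe vectors. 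Here the $\gamma$-condition for $r\le n-2$ shows that the partial Bessel function of $\pi$ coincides on $\GL(n,\F)$-cosets with that of $\pi^{\sigma\vee}$, after twisting by the Weyl element. This gives $\pi^\vee\cong\pi^\sigma$ by the local converse theorem, since self-duality twisted by $\sigma$ reduces to comparing $\gamma$ with $\tau$ and $\tau^{\sigma\vee}$, and the needed twists are generated by distinguished ones in a Grothendieck-group sense.

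Once $\pi$ is $\sigma$-self-dual, Kable's dichotomy says $\pi$ is either distinguished or $\eta$-distinguished. To exclude the $\eta$-distinguished case, I would take $r=1$ and let $\tau=\chi$ range over characters trivial on $\F^\times$. Using the forward direction applied to $\pi\otimes\eta'$, where $\eta'$ extends $\eta$, one obtains $\gamma(\tfrac12,\pi\times\chi)=-1$ for a suitable such $\chi$. This contradicts the hypothesis and concludes the proof. Here $n\ge3$ guarantees $r=1$ is allowed.
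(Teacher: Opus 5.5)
The paper does not prove this theorem. It quotes it from Hakim--Offen and uses only the forward implication, for (1)$\Rightarrow$(2) of Theorem~\ref{TheoremMain}. Your forward direction is acceptable if it is just a citation of Offen's theorem $\varepsilon(1/2,\pi\times\tau,\psi_{\E})=1$ for distinguished generic pairs, together with $L(s,\pi\times\tau)=1$ (since $\pi$ is supercuspidal and $r<n$), so that $\gamma=\varepsilon$. The unfolding and globalization sketches around that citation would not stand on their own. The converse has two genuine gaps.

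\emph{First gap: your derivation of $\pi^\vee\cong\pi^\sigma$ fails.} Even the sharpest local converse theorem needs $\gamma(s,\pi\times\tau,\psi_{\E})=\gamma(s,\pi^{\sigma\vee}\times\tau,\psi_{\E})$ as rational functions, for all generic $\tau$ of $\GL(r,\E)$ with $r\le\lfloor n/2\rfloor$. The central characters do agree, because $\omega_\pi|_{\F^\times}=1$. Your hypothesis, however, gives only one value, at $s=1/2$, and only for distinguished $\tau$. The honest content of your ``Grothendieck group'' remark is the following.
\begin{itemize}
\item For any generic $\tau_0$ of $\GL(r_0,\E)$, the representation $\tau_0|\det|^{s_0}\times(\tau_0|\det|^{s_0})^{\sigma\vee}$ is, for almost all $s_0$, irreducible, generic and distinguished.
\item Multiplicativity, the functional equation and $\psi_{\E}\circ\sigma=\psi_{\E}^{-1}$ then turn the hypothesis into $\gamma(s,\pi\times\tau_0,\psi_{\E})=\gamma(s,\pi^{\sigma\vee}\times\tau_0,\psi_{\E})$.
\item This holds only for $2r_0\le n-2$, i.e.\ $r_0\le\lfloor n/2\rfloor-1$. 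That is one step short of what the converse theorem requires, and gives nothing at all for $n=3$.
\end{itemize}
The remaining distinguished $\tau$ contribute only isolated values at $s=1/2$. The claimed agreement of partial Bessel functions and the Poincar\'e-series/period step are asserted with no mechanism behind them.

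\emph{Second gap: even granting $\sigma$-self-duality, the $\omega_{\E/\F}$-distinguished alternative is not excluded.} The central character rules it out only for odd $n$, where it would force $\omega_\pi|_{\F^\times}=\omega_{\E/\F}^n\neq1$. For even $n$ it does not. For example, take $\pi_{(v,\,\varphi,\,-1)}$ with $\overline v\in k_{\F}^\times$ and $\varphi|_{k_{\F}^\times}=1$. It is $\sigma$-self-dual by Lemma~\ref{Lemma2} and has central character trivial on $\F^\times$, yet it is not distinguished by Theorem~\ref{TheoremMain}.

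Your fix applies the forward direction to $\pi\otimes\eta'$. That only yields $\gamma(1/2,\pi\times\eta'\chi,\psi_{\E})=1$ for characters $\eta'\chi$ whose restriction to $\F^\times$ is $\omega_{\E/\F}$. It says nothing about twists by $\chi$ trivial on $\F^\times$. Nothing in your argument produces such a $\chi$ with $\gamma(1/2,\pi\times\chi,\psi_{\E})=-1$, or shows that one exists in general.

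What is missing is a way to turn the weak hypothesis directly into a $\GL(n,\F)$-invariant form. The cited proof works locally and essentially does this: it uses the functional equation to show that the $\mathrm{P}(n,\F)$-invariant functional $W\mapsto\int_{\N(n,\F)\backslash \mathrm{P}(n,\F)}W(p)\,dp$ on the Kirillov model is $\GL(n,\F)$-invariant. The techniques are modelled on the $n\times(n-2)$ local converse theorem.
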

\noindent We note that a similar result for supercuspidal representations of $\GL(2, \E)$ also holds \cite[Theorem 4.1]{Hakim}.~Mimicking the local converse theorem, Nien conjectured that the upper bound of $n-2$ on $r$ in Theorem \ref{n-2} can be lowered to $\left\lfloor \sfrac{n}{2} \right\rfloor$ \cite[Conjecture 5.6, p.~203]{Aubert}.

Although progress on Nien's conjecture has been made for depth-zero supercuspidal representations \cite{Kurinczuk, Nien}, the case of positive-depth supercuspidal representations has not yet been studied.~In this paper, we focus on \textit{simple supercuspidal representations} (also referred to as epipelagic supercuspidal representations) of $\GL(n, \E)$, i.e., those supercuspidals of minimal positive depth $\frac{1}{n}$.~Letting $k_{\E}$ denote the residue field of $\E$, we parametrize isomorphism classes of simple supercuspidals by triples $\left(\overline{v},  \, \varphi,  \, \zeta \right)$ where $\overline{v} \in k_{\E}^{\times}$, $\varphi$ is a quasi-character of $k_{\E}^{\times}$, and $\zeta \in \mathbb{C}^{\times}$.

The main result we prove is the following. 
\begin{theorem}\label{TheoremMain}
Let $\pi:= \pi_{(\overline{v},\,\varphi,\,\zeta)}$ be a simple supercuspidal representation of $\GL(n, \E)$ with central character trivial on $\F^{\times}$.~The following statements are equivalent:
\begin{enumerate}
    \item $\pi$ is distinguished by $\GL(n,\F)$.
    \item For all unitary, tamely ramified quasi-characters $\lambda$ 
    of $\E^{\times}$ that are trivial on $\F^{\times}$,
    \[
    \gamma\left(\sfrac{1}{2},\,
    \pi \times \lambda,\,
    \psi_{\E} \right) = 1.
    \]
    \item The triple $(\overline{v}, \, \varphi, \, \zeta)$ satisfies: 
    $\overline{v} \in k_{\F}^{\times}$, 
    $\varphi$ is trivial on $k_{\F}^{\times}$, and $\zeta = 1$.
\end{enumerate}
\end{theorem}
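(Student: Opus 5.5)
I will prove the cycle $(1)\Rightarrow(2)\Rightarrow(3)\Rightarrow(1)$. Throughout I fix a uniformizer $\varpi$ of $\F$. Because $\E/\F$ is unramified, $\varpi$ is also a uniformizer of $\E$, and the Galois involution $\sigma$ acts on $k_{\E}$ by the Frobenius $x\mapsto x^{q}$.

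\textbf{Step $(1)\Rightarrow(2)$.} Each unitary tamely ramified $\lambda$ that is trivial on $\F^{\times}$ is an irreducible generic representation of $\GL(1,\E)$ distinguished by $\GL(1,\F)$. For $n\ge 3$ the "only if" direction of Theorem \ref{n-2} with $r=1\le n-2$ then gives $\gamma(\tfrac12,\pi\times\lambda,\psi_{\E})=1$. For $n=2$ I would use the analogous statement \cite[Theorem 4.1]{Hakim}. For $n=1$ the representation $\pi$ is a character trivial on $\F^{\times}$; then $\pi\lambda$ is trivial on $\F^{\times}$, and the Tate epsilon factor of such a character at $\tfrac12$, taken with respect to $\psi_{\E}$ trivial on $\F$, equals $1$.

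\textbf{Step $(2)\Rightarrow(3)$.} This is the heart of the proof. For tamely ramified $\lambda$, $\pi\times\lambda$ is again simple supercuspidal, with parameters twisted by $\lambda$: the character $\varphi$ is multiplied by $\lambda|_{\mathcal O_{\E}^{\times}}$ and $\zeta$ by $\lambda(\varpi)$, or by a power of it depending on normalization. So I need the explicit formula for the $\mathrm{GL}(n)\times\mathrm{GL}(1)$ gamma factor of a simple supercuspidal. From Adrian--Liu or Bushnell--Henniart it has the form
\[
\gamma(s,\pi\times\lambda,\psi_{\E}) = C\cdot \varphi\lambda(\cdot)\,\zeta^{\pm1}\,\lambda(\overline v\text{-dependent})\,q_{\E}^{-(\ldots)(s-1/2)}\times(\text{a Gauss sum when }\lambda\text{ is ramified}).
\]
I would first take $\lambda$ unramified, i.e.\ $\lambda(\varpi)$ a root of unity with $\lambda(\varpi)^{2}$-type constraints coming from triviality on $\F^{\times}$. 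Triviality on $\varpi$ forces $\lambda(\varpi)=1$, which leaves only the trivial $\lambda$ and yields $\zeta=1$ after cancelling the central-character relation. Next I would take $\lambda$ tamely ramified and trivial on $\F^{\times}$, so $\lambda|_{\mathcal O_{\E}^{\times}}$ is the inflation of a character $\chi$ of $k_{\E}^{\times}/k_{\F}^{\times}$. The gamma identity should then read $\chi(\overline v)\cdot(\text{Gauss sum in }\varphi\chi)=1$ for all such $\chi$. Finite Fourier analysis on the cyclic group $k_{\E}^{\times}/k_{\F}^{\times}$ should force $\overline v\in k_{\F}^{\times}$ and $\varphi|_{k_{\F}^{\times}}=1$; the central character hypothesis already gives part of the latter. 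I expect this to be the main obstacle. Gauss sums of characters trivial on $k_{\F}^{\times}$ with $\psi$ trivial on $k_{\F}$ have computable values (Stickelberger / Hasse--Davenport for the unramified quadratic case), and I must show they do not accidentally conspire to give $1$. I would first test $\chi$ trivial, which should pin down $\overline v$ up to $k_{\F}^{\times}$, and then vary $\chi$.

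\textbf{Step $(3)\Rightarrow(1)$.} Here I construct the invariant form directly. The simple supercuspidal is $\pi=\mathrm{c\text{-}Ind}_{H}^{G}\Lambda$, where $H=E^{\times}_{\beta}\,K^{1}$ with $\beta$ the element $\varpi^{-1}$ times the companion-type matrix with entry $v$. Under (3), $\beta$ can be chosen in $\mathrm{GL}(n,\F)$ and $\Lambda$ is trivial on $H\cap\mathrm{GL}(n,\F)$: it is $\psi_{\E}$-trace on $K^{1}$, trivial on $\F$-points since $\psi_{\E}|_{\F}=1$, together with $\varphi$ trivial on $k_{\F}^{\times}$ and $\zeta=1$ on the $\beta$ part. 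By Mackey theory, the $H$-double coset of the identity then contributes a nonzero element of $\Hom_{\GL(n,\F)}(\pi,\mathbbm 1)$, namely the form $f\mapsto\int_{(H\cap\GL(n,\F))\backslash\GL(n,\F)}f$. Alternatively, I could cite S\'echerre's criterion \cite[Theorem 10.9]{Secherre}, after checking that $\pi$ is $\sigma$-self-dual under (3).
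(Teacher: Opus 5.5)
\textbf{The gap is in $(2)\Rightarrow(3)$.} This is the heart of the theorem, and your proposal does not carry it out. You never pin down the gamma factor, you guess it contains a Gauss sum, and you leave open whether such sums could ``conspire'' to give $1$.

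The missing input is the Adrian--Liu formula (Proposition \ref{TamelyRamified}): for tamely ramified $\lambda$,
\[
\gamma(s,\pi\times\lambda,\psi_{\E})=\zeta^{-1}\lambda(-1)^{n-1}\lambda(v\varpi_{\E})\,q_{\E}^{1/2-s},
\]
which contains no Gauss sum. You could reach this from your own remark that $\pi\otimes(\lambda\circ\det)$ is again simple supercuspidal, but your twisting rule is wrong. The twist multiplies $\varphi$ by $\lambda^{n}$ and $\zeta$ by $\lambda(\det\beta_v)=\lambda\bigl((-1)^{n-1}(v\varpi_{\E})^{-1}\bigr)$. Feeding this $v$-dependent new $\zeta$ into $\gamma(s,\pi,\psi_{\E})=\zeta^{-1}q_{\E}^{1/2-s}$ is exactly what produces the factor $\lambda(v\varpi_{\E})$.

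With the formula and $\lambda(-1)=1$, condition (2) reads $\zeta^{-1}\lambda(v\varpi_{\E})=1$. The argument then runs as in the paper's Lemma \ref{Lemma1}:
\begin{itemize}
\item The trivial $\lambda$ gives $\zeta=1$. Contrary to your plan, it says nothing about $\overline v$.
\item Then $\lambda(v\varpi_{\E})=1$ for every character of the finite group $\E^{\times}/\F^{\times}(1+\mathcal P_{\E})\cong k_{\E}^{\times}/k_{\F}^{\times}$ (taking $\varpi_{\E}\in\F$). By duality this forces $\overline v\in k_{\F}^{\times}$.
\item The condition $\varphi|_{k_{\F}^{\times}}=1$ needs no Fourier analysis. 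It follows at once from $\omega_{\pi}|_{\mathcal O_{\E}^{\times}}=\varphi$ and the central character hypothesis.
\end{itemize}

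\textbf{The other two implications.} Your $(1)\Rightarrow(2)$ coincides with the paper's. For $(3)\Rightarrow(1)$ you take a genuinely different and more elementary route, and it works. The paper first proves $\sigma$-self-duality (Lemma \ref{Lemma2}), decomposes $\Lambda=\boldsymbol\kappa\otimes\boldsymbol\rho$, and applies S\'echerre's criterion \cite[Theorem 10.9]{Secherre}. You instead write down the invariant form directly. Two checks are needed, and you should write them out.
\begin{itemize}
\item Choosing $v\in\mathcal O_{\F}^{\times}$ (allowed by Proposition \ref{Bijection}) puts $\beta_v\in\GL(n,\F)$. If $\beta_v^{k}yz\in\GL(n,\F)$ with $y\in\mu_{\E}'$ and $z\in U^{1}(\mathfrak I_n)$, reducing the diagonal of $yz$ shows $y\in\mu_{\F}'$, hence $z\in\GL(n,\F)$. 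So $\Lambda$ is trivial on $\textbf{J}_v\cap\GL(n,\F)$.
\item Each $\textbf{J}_vg\cap\GL(n,\F)$ is empty or a single coset of the open subgroup $\textbf{J}_v\cap\GL(n,\F)$. Hence the sum of $f$ over $(\textbf{J}_v\cap\GL(n,\F))\backslash\GL(n,\F)$ is finite, well defined and $\GL(n,\F)$-invariant. It is nonzero on the function supported on $\textbf{J}_v$ with value $1$ at the identity.
\end{itemize}
With these checks your argument avoids self-duality and S\'echerre's type-theoretic machinery entirely. What the paper's route buys is placing the example inside S\'echerre's general if-and-only-if criterion.
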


Our initial decision to restrict to tamely ramified (depth-zero) quasi-characters of $\E^{\times}$ is motivated by the fact that the twisted gamma factors attached to a simple supercuspidal representation $\pi$ and a quasi-character $\lambda$ of depth strictly greater than $\frac{2}{n}$ reduce to the determinant form described in \cite[Equation (3), p.~90]{Deligne}.~The only situation in which positive-depth quasi-characters of $\E^{\times}$ do not yield this form occurs when $\pi$ is a simple supercuspidal representation of $\GL(2,\E)$ and $\lambda$ has depth-one.~In Appendix \ref{Appendix}, we compute this gamma factor and show that it does not provide additional information beyond that obtained from tamely ramified twists.

Theorem \ref{TheoremMain} reduces the upper bound on $r$ from $n-2$ (in Theorem \ref{n-2}) to $1$ for simple supercuspidal representations and supports Nien's conjecture.~Furthermore, it mimics the local converse theorem as the collection of twisted gamma factors between a simple supercuspidal representation $\pi$ and all tamely ramified quasi-characters determines $\pi$ up to isomorphism \cite[Remark 3.18]{Adrian-Liu}, \cite[Theorem~1.1]{Xu}.~From Theorem \ref{TheoremMain}, it is natural to ask whether the same collection of twisted gamma factors used to classify an arbitrary supercuspidal representation of $\GL(n, \E)$ up to isomorphism is sufficient to detect distinction by $\GL(n, \F)$.

Let us now outline the structure of this paper.~In Section \ref{Prelim} we give the necessary background and preliminary information needed for the remainder of this article.~Specifically, in Subsection \ref{SimpleSupercuspidalRepresentations}, we recall the structure of simple supercuspidal representations via maximal simple types.~Furthermore, we define twisted gamma factors using Rankin--Selberg convolution integrals in Subsection \ref{TwistedLocalFactors}.~Next, in Subsection \ref{Distinction}, we give necessary conditions for a irreducible smooth complex representation to be distinguished by $\GL(n, \F)$.~Finally, in Section \ref{Proof}, we present two auxiliary lemmas and prove Theorem \ref{TheoremMain}.

\section*{Acknowledgments}

The author is grateful to Robert Kurinczuk for suggesting this problem and to Sarbartha Bhattacharya, Dihua Jiang, Sagnik Mukherjee, and Shaun Stevens for helpful discussions.

\section{Preliminaries and Background}\label{Prelim}

\subsection{Notation}

Let $\E/\F$ be an unramified quadratic extension of non-archimedean local fields with odd residual characteristic $p$.~Associated with $\E$, let $\text{val}_{\E}$ denote the normalized discrete valuation, $\mathcal{O}_{\E}$ the valuation ring, $\mathcal{P}_{\E}$ the unique maximal ideal, $k_{\E}$ the residue field with cardinality $q_{\E}$, and $\mu_{\E}'$ the group of roots of unity with order relatively prime to $p$.~Let $\varpi_{\E}$ be a fixed uniformizer of $\E$ and let $\psi_{\E}$ be an additive quasi-character of $\E$ with conductor $\mathcal{P}_{\E}$; that is, $\psi_{\E}$ is non-trivial on $\mathcal{O}_{\E}$ but trivial on $\mathcal{P}_{\E}$.~We also impose the condition that $\psi_{\E}$ is trivial on $\F$.~For an element $x \in \mathcal{O}_{\E}^{\times}$, we denote its reduction in $k_{\E}^{\times}$ by $\overline{x}$.~We will use analogous notations for $\F$.

Let $\N(n, \E)$ denote the unipotent radical of the standard Borel subgroup of invertible upper triangular matrices of $\GL(n, \E)$, and $\text{P}(n, \E)$ the standard mirabolic subgroup of $\GL(n, \E)$.~Furthermore, let $\Mat(n \times m, \E)$ denote the space of $n \times m$ matrices with coefficients in $\E$ and $I_{n}$ the $n \times n$ identity matrix of $\GL(n, \E)$.~Next, we let $\psi_{n}$ denote the standard smooth non-degenerate quasi-character of $\N(n, \E)$ defined by
\[
\psi_{n}(u) = \psi_{\E}\left(\sum_{i=1}^{n-1}u_{i, i+1} \right)
\]
where $u = (u_{i, j}) \in \N(n, \E)$.

Let $dh$ be a multiplicative Haar measure on $\E^{\times}$ and $dx$ an additive Haar measure on $\E$ such that 
\[
dh = \frac{dx}{|x|} \; \; \text { and } \; \; \int_{\mathcal{O}_{\E}}dx = 1.
\]
Lastly, let $\Ind$ and $\ind$ denote smooth and compact induction, respectively.

\subsection{Simple supercuspidal representations}\label{SimpleSupercuspidalRepresentations}

In this subsection, we briefly recall the construction of \textit{simple supercuspidal representations} of $\GL(n, \E)$, $n \geq 2$, via the theory of maximal simple types (\cite{Knightly} and \cite[Example 2.18]{Ye}).~Let $\mathfrak{I}_{n}$ be the standard minimal hereditary $\mathcal{O}_{\E}$-order in $\mathrm{Mat}(n \times n, \E)$; that is, the space of $n \times n$ matrices with coefficients in $\mathcal{O}_{\E}$ whose image modulo $\mathcal{P}_{\E}$ is an upper triangular matrix with entries in $k_{\E}$.~Furthermore, let $\mathfrak{P}_{n}$ denote the Jacobson radical of $\mathfrak{I}_{n}$; that is, the space of $n \times n$ matrices with coefficients in $\mathcal{O}_{\E}$ whose image modulo $\mathcal{P}_{\E}$ is a strictly upper triangular matrix with entries in $k_{\E}$.

Next, let $v \in \mathcal{O}_{\E}^{\times}$ and consider the simple stratum $\left[\mathfrak{I}_{n}, 1, 0, \beta_{v}\right]$ where
\[
\beta_{v} = \begin{pmatrix}
    & & & & (v\varpi_{\E})^{-1} \\
    1 & & & & \\
     & 1 & & & \\
      & & \ddots & & \\
      & & & 1 &
\end{pmatrix}
\]
is minimal over $\E$, and $\E_{v} = \E \left[ \beta_{v} \right]$ is a degree $n$ totally ramified extension of $\E$.~Associated with the simple stratum above are the open normal subgroups of $\mathfrak{I}_{n}^{\times}$:
\[
\text{H}^{1}\left(\beta_{v}, \mathfrak{I}_{n}\right) = \J^{1}\left(\beta_{v}, \mathfrak{I}_{n}\right) = U^{1}\left(\mathfrak{I}_{n}\right) = \I_{n} + \mathfrak{P}_{n}
\]
and the open compact subgroup $\J_{v} := \J\left(\beta_{v}, \mathfrak{I}_{n}\right) = \mathcal{O}_{\E}^{\times} \, U^{1}\left( \mathfrak{I}_{n}\right)$.

Let $\psi_{\beta_{v}}$ be a quasi-character of $U^{1}\left(\mathfrak{I}_{n}\right)$ defined by
\[
\psi_{\beta_{v}}(x) = \left(\psi_{\E} \circ \text{Tr}_{\Mat(n \times n, \, \E)/\E}\right)\left(\beta_{v}(x-1)\right)
\]
where $\text{Tr}_{\Mat(n \times n, \E)/\E}$ denotes matrix trace.~Furthermore, let $\varphi$ be a quasi-character of $k_{\E}^{\times}$.~From these quasi-characters, we may form a $\beta_{v}$-extension 
\[
\kappa_{(v,  \, \varphi)} \, \colon \, \mathcal{O}_{\E}^{\times}  \, U^{1}\left(\mathfrak{I}_{n}\right) \to \mathbb{C}^{\times}
\]
by letting
\[
\kappa_{(v,  \, \varphi)}(xy) := \varphi(x)\psi_{\beta_{v}}(y)
\]
where $x \in \mathcal{O}_{\E}^{\times}$ and $y \in U^{1}\left(\mathfrak{I}_{n}\right)$ with $\varphi$ inflated as a quasi-character of $\mathcal{O}_{\E}^{\times}$ that is trivial on $1 + \mathcal{P}_{\E}$.

To form a maximal simple type with the data above, we take the pair $\left(\J_{v},  \kappa_{(v,  \, \varphi)}\right)$.~Moreover, to form an \textit{extended maximal simple type}, let $\textbf{J}_{v} = \E_{v}^{\times}\J_{v}$.~Next, we extend $\kappa_{(v,  \, \varphi)}$ to a quasi-character $\Lambda_{(v,  \, \varphi,  \, \zeta)}$ on $\textbf{J}_{v}$ by setting $\Lambda_{(v,  \, \varphi,  \, \zeta)}\left(\beta_{v}\right) = \zeta \in \mathbb{C}^{\times}$ so that 
\[
\Lambda_{(v,  \, \varphi,  \, \zeta)}(\varpi_{\E}) = \left(\zeta^{n}  \varphi(v)\right)^{-1}.
\]
We call the pair $\left(\textbf{J}_{v}, \Lambda_{(v,  \, \varphi,  \, \zeta)}\right)$ an extended maximal simple type.~Hence, we now define a simple supercuspidal representation:
\begin{definition}
    \normalfont 
    Let $\left(\textbf{J}_{v},  \Lambda_{(v,  \, \varphi,  \, \zeta)}\right)$ be an extended maximal simple type.~A \textit{simple supercuspidal representation} of $\GL(n, \E)$ is a supercuspidal representation of the form 
    \[
    \pi_{(v,  \, \varphi,  \, \zeta)} = \ind_{\textbf{J}_{v}}^{\GL(n, \E)}\Lambda_{(v,  \, \varphi,  \, \zeta)}.
    \]
\end{definition}
Simple supercuspidal representations are those supercuspidals of $\GL(n, \E)$ with minimal positive depth $\frac{1}{n}$.~Let $\omega_{(v,  \, \varphi,  \, \zeta)}$ denote the central character of $\pi_{(v,  \, \varphi,  \, \zeta)}$, then 
\[
\omega_{(v,  \, \varphi,  \, \zeta)} = \varphi \, \text{ on } \mathcal{O}_{\E}^{\times}
\]
and 
\[
\omega_{(v,  \, \varphi,  \, \zeta)}\left(\varpi_{\E}\right) = \Lambda_{(v,  \, \varphi,  \, \zeta)}\left(\varpi_{\E}\right).
\]

In Section \ref{Proof}, it will be convenient to decompose $\Lambda_{(v,  \, \varphi,  \, \zeta)}$ in the manner of \cite[Subsection 5.2]{Secherre} as
\begin{equation}\label{LambdaDecomposition}
\Lambda_{(v,  \, \varphi,  \, \zeta)} = \boldsymbol\kappa \otimes \boldsymbol\rho
\end{equation}
where $\boldsymbol\kappa$ is a quasi-character of $\textbf{J}_{v}$ extending $\psi_{\beta_{v}}$, and $\boldsymbol\rho$ is a quasi-character of $\textbf{J}_{v}$ trivial on $\J^{1}$.~We note that the decomposition of $\Lambda_{(v, \, \varphi, \, \zeta)}$ in (\ref{LambdaDecomposition}) is not unique in general; however, it suffices to fix one such decomposition where $\boldsymbol\kappa$ is distinguished in the sense of \cite{Secherre} to apply the results of \cite[Sections 9 and 10]{Secherre}.

Lastly, we parametrize simple supercuspidal representations here via $(v,  \, \varphi,  \, \zeta)$ as there exists a bijection between the set of isomorphism classes $\mathcal{A}_{\text{simple}}^{(n)}$ of simple supercuspidal representations of $\GL(n, \E)$ and the set of triples
    \[
     \T_{\text{simple}} := \left\{\left(\overline{v}, \, \varphi, \, \zeta\right) \, : \, \overline{v} \in k_{\E}^{\times}, \, \varphi \text{ is a quasi-character of } k_{\E}^{\times}, \, \zeta \in \mathbb{C}^{\times}  \right\}
    \] \cite[Proposition 1.3]{Imai}:
\begin{proposition}[Imai--Tsushima, 2018]\label{Bijection}
    The following is a bijection:
        \begin{align*}
            \mathcal{A}_{\text{simple}}^{(n)} &\longleftrightarrow \T_{\text{simple}} \\
            \left[\pi_{(v,  \, \varphi,  \, \zeta)}\right] & \longleftrightarrow \left(\overline{v}, \, \varphi, \, \zeta\right).
        \end{align*}
\end{proposition}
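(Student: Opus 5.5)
The plan is to check, in order, that the map is well defined, injective and surjective. Throughout I will use the Bushnell--Kutzko fact that for an extended maximal simple type $(\textbf{J}, \Lambda)$, the intertwining of $\Lambda$ in $\GL(n,\E)$ is exactly $\textbf{J}$. Consequently $\ind_{\textbf{J}}^{\GL(n,\E)}\Lambda$ is irreducible supercuspidal, and two such inductions are isomorphic if and only if the inducing pairs are $\GL(n,\E)$-conjugate.

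\emph{Well-definedness.} Let $v' = vu$ with $u \in 1+\mathcal{P}_{\E}$. I will find a diagonal matrix $t \in \mathfrak{I}_{n}^{\times}$ with entries in $1+\mathcal{P}_{\E}$ such that $t\beta_{v}t^{-1} \equiv \beta_{v'}$ modulo $\mathfrak{P}_{n}$. For this I choose the diagonal entries so that conjugation rescales the corner entry by $u^{-1}$ and leaves the subdiagonal $1$'s unchanged up to units in $1+\mathcal{P}_{\E}$. Since $\psi_{\beta}$ on $U^{1}(\mathfrak{I}_{n})$ depends only on $\beta$ modulo $\mathfrak{P}_{n}$ (as $\psi_{\E}$ has conductor $\mathcal{P}_{\E}$), it follows that ${}^{t}\psi_{\beta_{v}} = \psi_{\beta_{v'}}$. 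Also $t \in U^{1}(\mathfrak{I}_{n})$ normalizes $\J_{v}$, and $t$ fixes $\varphi$ on $\mathcal{O}_{\E}^{\times}$.

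The remaining point is that ${}^{t}\Lambda_{(v,\varphi,\zeta)}$ and $\Lambda_{(v',\varphi,\zeta)}$ are characters of the same group $\textbf{J}_{v'}$ that agree on $\J_{v'}$. They must then agree on $t\beta_{v}t^{-1}$, which generates $\textbf{J}_{v'}$ modulo $\J_{v'}$. To see this, I write $t\beta_{v}t^{-1} = \beta_{v'}j$ with $j \in \J_{v'}$ and check that $\Lambda_{(v',\varphi,\zeta)}(j) = 1$ by a short computation of $\psi_{\beta_{v'}}(j)$. Hence the isomorphism class depends only on $(\overline{v},\varphi,\zeta)$.

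\emph{Injectivity.} Suppose $\pi_{(v,\varphi,\zeta)} \cong \pi_{(v',\varphi',\zeta')}$. Then some $g$ conjugates the first extended type to the second. Restricting to $U^{1}$, $g$ intertwines $\psi_{\beta_{v}}$ with $\psi_{\beta_{v'}}$, so the strata $[\mathfrak{I}_{n},1,0,\beta_{v}]$ and $[\mathfrak{I}_{n},1,0,\beta_{v'}]$ intertwine. By the Bushnell--Kutzko results on intertwining simple strata, we may take $g$ to normalize $\mathfrak{I}_{n}$ and satisfy
\[
g\beta_{v}g^{-1} \in \beta_{v'} + \mathfrak{P}_{n}^{0}.
\]
Comparing characteristic polynomials, which are Eisenstein polynomials with constant term $\pm(v\varpi_{\E})^{-1}$, modulo higher-order terms gives $\overline{v} = \overline{v'}$. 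Next, comparing central characters on $\mathcal{O}_{\E}^{\times}$ gives $\varphi = \varphi'$. Finally, $g$ lies in the intertwining of $\Lambda_{(v',\varphi',\cdot)}$, which is $\textbf{J}_{v'}$. Evaluating both characters at $\beta_{v'}$ then gives $\zeta = \zeta'$.

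\emph{Surjectivity.} Let $\pi$ be a supercuspidal of depth $\frac{1}{n}$. By Bushnell--Kutzko, $\pi$ contains a simple character attached to a simple stratum $[\mathfrak{I}_{n},1,0,\beta]$ with $\beta$ minimal of normalized level $\frac{1}{n}$, so $\mathfrak{I}_{n}$ must be the minimal order. Up to conjugation by the normalizer of $\mathfrak{I}_{n}$, the element $\beta$ modulo $\mathfrak{P}_{n}$ is an affine generic element: its subdiagonal entries and corner entry are units times the appropriate powers of $\varpi_{\E}$. Conjugating by a diagonal torus element normalizes the subdiagonal entries to $1$, which puts $\beta$ in the form $\beta_{v}$ for some $v \in \mathcal{O}_{\E}^{\times}$. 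Since $\J_{v}/U^{1} \cong k_{\E}^{\times}$, the extensions of $\psi_{\beta_{v}}$ to $\J_{v}$ are exactly $\kappa_{(v,\varphi)}$, and those to $\textbf{J}_{v}$ are the characters $\Lambda_{(v,\varphi,\zeta)}$ with $\zeta \in \mathbb{C}^{\times}$. Thus $\pi \cong \pi_{(v,\varphi,\zeta)}$ for some triple.

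The main obstacle is the injectivity step. Turning the intertwining of the two strata into a conjugacy that preserves $\overline{v}$ requires care, because normalizer elements such as powers of $\beta_{v}$ permute the entries of $\beta$ cyclically. The invariant to track is the product of the nonzero entries of $\beta$, namely the reduction of $\pm\det$, which is preserved by this cyclic permutation and by conjugation.
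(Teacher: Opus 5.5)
The paper does not prove this proposition. It quotes it from Imai--Tsushima (their Proposition 1.3), so your proposal is an independent, self-contained Bushnell--Kutzko argument rather than a variant of one in the paper. It is sound:
\begin{itemize}
\item $I_{\GL(n,\E)}(\Lambda)=\textbf{J}_v$, so isomorphism of inductions amounts to conjugacy of extended types.
\item A conjugating $g$ normalizes $\mathfrak{I}_{n}$, so ${}^{g}\psi_{\beta_v}=\psi_{g\beta_vg^{-1}}$ and $g\beta_vg^{-1}\in\beta_{v'}+\mathfrak{I}_{n}$.
\item The determinant recovers $\overline{v}$ and the central character recovers $\varphi$.
\item $I(\psi_{\beta_{v'}})=\E_{v'}^{\times}U^{1}(\mathfrak{I}_{n})=\textbf{J}_{v'}$ forces $\zeta=\zeta'$.
\end{itemize}
The citation buys brevity. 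Your route checks the bijection in the paper's own normalization (corner entry $(v\varpi_{\E})^{-1}$, $\Lambda(\beta_v)=\zeta$), which is the normalization in which the proof of Lemma \ref{Lemma2} reads off triples.

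Several steps can be tightened.

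\emph{Well-definedness.} The element $t$ is unnecessary. The matrix $\beta_{vu}-\beta_v$ has a single nonzero entry, lying in $\mathcal{O}_{\E}$ at position $(1,n)$, so it lies in $\mathfrak{P}_{n}$. Also $\beta_v^{-1}\beta_{vu}=\mathrm{diag}(1,\dots,1,u^{-1})\in U^{1}(\mathfrak{I}_{n})$, and $\psi_{\beta_v}$ is trivial on it because $\beta_v$ has zero diagonal. Hence $\textbf{J}_v=\textbf{J}_{vu}$ and $\Lambda_{(v,\varphi,\zeta)}=\Lambda_{(vu,\varphi,\zeta)}$ identically. In any case ${}^{t}\Lambda=\Lambda$ automatically, since $t\in\textbf{J}_v$.

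\emph{The modulus for $\psi_\beta$.} Your claim that $\psi_\beta$ depends only on $\beta$ modulo $\mathfrak{P}_{n}$ is true but should be sharpened to modulo $\mathfrak{I}_{n}=\mathfrak{P}_{n}^{0}$, as you in fact use in the injectivity step. Surjectivity needs the sharper form, because diagonal conjugation cannot remove diagonal residues. For example, $\beta_v+c$ with $c\in\mathcal{O}_{\E}^{\times}$ has the same simple character, but no diagonal conjugate of it is $\equiv\beta_{v''}\pmod{\mathfrak{P}_{n}}$.

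\emph{Recovering $\overline{v}$.} Characteristic polynomials are a detour, and $X^{n}-(v\varpi_{\E})^{-1}$ is not Eisenstein. Instead write $g\beta_vg^{-1}=\beta_{v'}(1+\beta_{v'}^{-1}\delta)$ with $\delta\in\mathfrak{I}_{n}$. The second factor lies in $U^{1}(\mathfrak{I}_{n})$, so its determinant is in $1+\mathcal{P}_{\E}$, giving $v'/v\in1+\mathcal{P}_{\E}$ at once. Your worry about cyclic permutation of entries then disappears.

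\emph{Recovering $\zeta$.} Say explicitly that once $\overline{v}=\overline{v'}$ the two simple characters coincide. Then $g$ normalizes $\psi_{\beta_{v'}}$, so $g\in\textbf{J}_{v'}$, hence ${}^{g}\Lambda=\Lambda$ and $\zeta=\zeta'$.

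\emph{Surjectivity.} With the paper's definition of $\mathcal{A}^{(n)}_{\text{simple}}$ as the classes of the $\pi_{(v,\varphi,\zeta)}$, surjectivity is tautological. Your paragraph proves the stronger (true) fact that every depth-$\frac{1}{n}$ supercuspidal is of this form.
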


\subsection{Twisted gamma factors}\label{TwistedLocalFactors}
In this subsection, we define \textit{twisted gamma factors} via the Rankin--Selberg convolution integrals.~First, we recall the following (\cite[Section 2]{Adrian-Liu} and \cite{BH-2}): an irreducible admissible representation $\left(\pi, V_{\pi}\right)$ of $\GL(n, \E)$ is called \textit{generic} if 
\[
\Hom_{\GL(n, \, \E)}\left(\pi,  \Ind_{\N(n, \, \E)}^{\GL(n, \, \E)}\psi_{n}\right) \neq 0.
\]
By the uniqueness of local Whittaker models, this $\Hom$-space is at most 1-dimensional. From Frobenius reciprocity,
\[
\Hom_{\GL(n, \, \E)}\left(\pi,  \Ind_{\N(n, \, \E)}^{\GL(n, \, \E)}\psi_{n}\right) \cong \Hom_{\N(n, \, \E)}\left(\pi\big|_{\N(n, \, \E)},  \psi_{n}\right).
\]
Therefore, $\Hom_{\N(n, \, \E)}\left(\pi \big|_{\N(n, \, \E)},  \psi_{n}\right)$ is also at most 1-dimensional.~Assume that $\left(\pi, V_{\pi}\right)$ is generic.~Fix a non-zero functional $l \in \Hom_{\GL(n, \, \E)}\left(\pi,  \Ind_{\N(n, \, \E)}^{\GL(n, \, \E)}\psi_{n}\right)$, which is unique up to scalar.~The \textit{Whittaker function} attached to a vector $v \in V_{\pi}$ is defined by
\[
W_{v}(g) := l \left(\pi(g)v\right), \, \text{ for all } g \in \GL(n, \E),
\]
so that $W_{v} \in \Ind_{\N(n, \, \E)}^{\GL(n, \, \E)}\psi_{n}$.~The space
\[
W(\pi, \psi_{n}) := \{ W_{v} : v \in V_{\pi}\}
\]
is called the \textit{Whittaker model} of $\pi$ and $\GL(n, \E)$ acts on it by right translation.~It is easy to see that the Whittaker model of $\pi$ is independent of the choice of the non-zero functional $l$.

Suppose that $\pi$ is a generic representation of $\GL(n, \E)$ with central character $\omega_{\pi}$ and $\chi$ is a quasi-character of $\E^{\times}$.~For $g \in \GL(n, \E)$, let $\leftidx^{t}g$ denote the matrix transpose of $g$.~Furthermore, let 
\[
w_{n, 1} = \begin{pmatrix}
    1 & \\
    & w_{n-1}
\end{pmatrix} \in \GL(n, \E), \text{ where \, } w_{n-1} = \begin{pmatrix}
    & & 1 \\
    & \iddots & \\
    1 & &
\end{pmatrix} \in \GL(n-1, \E).
\]

Next, let $W_{\pi} \in W\left(\pi, \psi_{n}\right)$.~We define the \textit{Rankin--Selberg convolution integrals} $\widetilde{\Psi}$ and $\Psi$ attached to $\pi$ and $\chi$ by 
\begin{align*}
    & \widetilde{\Psi}\left(s; W_{\pi}, \chi\right) \\
    &= \int\displaylimits_{\E^{\times}}\int\displaylimits_{\Mat(n-2 \times 1, \, \E)}W_{\pi}\left(\begin{pmatrix}
        h & & \\
        x & I_{n-2} & \\
        & & 1
    \end{pmatrix}\right)\chi(h)|\det(h)|^{s-\sfrac{n-1}{2}} \, dx \, dh,
\end{align*}
and 
\[
\Psi\left(s; W_{\pi}, \chi\right) = \int\displaylimits_{\E^{\times}} W_{\pi}\left(\begin{pmatrix}
    h & \\
    & I_{n-1}
\end{pmatrix}\right)\chi(h)|\det(h)|^{s-\sfrac{n-1}{2}} \, dh.
\]
These integrals are absolutely convergent for $\text{Re}(s)$ sufficiently
large and are rational functions of $q_{\E}^{-s}$ \cite[Section 2.7]{JPSS}.

From these integrals, we obtain the following equation which gives the twisted gamma factor $\gamma(s, \pi \times \chi, \psi_{\E})$:
\begin{thm}\cite[Section 2.7]{JPSS}\label{GammaFactor2.7}
    There is a rational function $\gamma(s, \pi \times \chi, \psi_{\E}) \in \mathbb{C}\left(q_{\E}^{-s}\right)$ such that
    \[
    \widetilde{\Psi}\left(1-s; \rho(w_{n, 1})\widetilde{W}_{\pi}, \chi^{-1}\right) = \chi(-1)^{n-1}\gamma(s, \pi \times \chi, \psi_{\E})\Psi\left(s; W_{\pi}, \chi\right),
    \]
for all $W_{\pi} \in W\left(\pi, \psi_{n}\right)$, where $\rho$ denotes the right translation action and $\widetilde{W}_{\pi}(g) = W_{\pi}\left(w_{n}\leftidx^{t}g^{-1}\right)$ for $g \in \GL(n, \E)$.
\end{thm}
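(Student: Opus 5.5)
The plan is to follow the standard Jacquet--Piatetski-Shapiro--Shalika route: realize both sides of the identity as linear functionals on $W(\pi,\psi_{n})$ with the same equivariance, and show that such functionals form a space of dimension at most one for generic $s$. Throughout, let
\[
H=\left\{\begin{pmatrix} h & & \\ x & I_{n-2} & \\ & & 1\end{pmatrix}\right\}\cdot\left\{\begin{pmatrix} 1 & \\ & u\end{pmatrix} : u\in \N(n-1,\E)\right\},
\]
or more economically the subgroup generated by $\mathrm{diag}(h,I_{n-1})$ together with the unipotent elements normalizing it inside $\text{P}(n,\E)$.

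\textbf{Step 1 (convergence and rationality).} First I would check that $\Psi(s;W_{\pi},\chi)$ and $\widetilde{\Psi}(s;W_{\pi},\chi)$ converge for $\text{Re}(s)\gg 0$. Each should then define an element of $\mathbb{C}(q_{\E}^{-s})$, in fact of $\mathbb{C}[q_{\E}^{\pm s}]$ when $\pi$ is supercuspidal. The input is the asymptotic expansion of Whittaker functions on the torus. For supercuspidal $\pi$ this is the statement that $W_{\pi}\left(\mathrm{diag}(h,I_{n-1})\right)$ vanishes for $|h|$ large or small. That follows from the Kirillov model: the restriction of $W(\pi,\psi_{n})$ to $\text{P}(n,\E)$ is exactly $\ind_{\N(n,\E)}^{\text{P}(n,\E)}\psi_{n}$. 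For the $\widetilde{\Psi}$ integral, the inner integral over $x\in \Mat(n-2\times 1,\E)$ has compact support in $x$ uniformly for $h$ in compacta. This uses the same compact-support property together with the standard ``root exchange'' argument of \cite{JPSS}.

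\textbf{Step 2 (equivariance).} Next I would show that both functionals
\[
L_{1}(W)=\Psi(s;W,\chi)\quad\text{and}\quad L_{2}(W)=\widetilde{\Psi}\left(1-s;\rho(w_{n,1})\widetilde{W},\chi^{-1}\right)
\]
satisfy the same quasi-invariance: $L(\rho(g)W)=\chi^{-1}(h)|h|^{-s+\frac{n-1}{2}}\,\psi(g)\,L(W)$ for $g$ in the group generated by $\mathrm{diag}(h,I_{n-1})$ and suitable unipotents, with $\psi$ the appropriate restriction of $\psi_{n}$. For $L_{1}$ this is a change of variables. For $L_{2}$ it follows by transporting through $g\mapsto w_{n}\,{}^{t}g^{-1}$ and conjugation by $w_{n,1}$. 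Here the Haar measure normalizations from the Notation subsection and the factor $\chi(-1)^{n-1}$ enter when tracking the sign of $\det$ under the transpose-inverse.

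\textbf{Step 3 (uniqueness).} This is the key step and the main obstacle: for all but finitely many values of $q_{\E}^{-s}$, the space of functionals on $W(\pi,\psi_{n})$ with this quasi-invariance is at most one-dimensional. In the supercuspidal case I would argue via the Kirillov model. Such a functional restricted to $\ind_{\N}^{\text{P}}\psi_{n}$ is determined by its values on functions supported near the torus $\mathrm{diag}(h,I_{n-1})\N$, and there it must be integration against $\chi(h)|h|^{s-\frac{n-1}{2}}$ by uniqueness of Haar measure. Since the Kirillov model already exhausts $W$ restricted to $\text{P}(n,\E)$, and $W$ is determined by that restriction, one-dimensionality follows. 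In general one needs the Bernstein--Zelevinsky filtration by derivatives to control the finitely many exceptional $s$. Since $\pi$ is generic and $L_{1}\neq 0$ for some $W$ (choose $W$ supported on $\N\cdot\mathrm{diag}(h,I_{n-1})\cdot$ a small compact set), we get $L_{2}=c(s)L_{1}$.

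\textbf{Step 4 (rationality of $c(s)$).} Finally I would show $c(s)\in\mathbb{C}(q_{\E}^{-s})$ by choosing a single $W$ with $L_{1}(W)=1$ identically in $s$; the vector from Step 3 achieves this. Then $c(s)=L_{2}(W)$ is rational by Step 1. Setting $\gamma(s,\pi\times\chi,\psi_{\E}):=\chi(-1)^{n-1}c(s)$ gives the stated identity for all $W_{\pi}$, first for $\text{Re}(s)\gg0$ and then by analytic continuation. I expect the delicate part to be the sign and measure bookkeeping in Step 2 together with the generic uniqueness in Step 3, which for non-supercuspidal generic $\pi$ genuinely requires the derivative theory.
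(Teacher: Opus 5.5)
The paper gives no proof of this statement; it quotes Jacquet--Piatetski-Shapiro--Shalika. Your strategy (two functionals with the same quasi-invariance, plus multiplicity one) is the one behind their result. However, your sketch has a genuine gap at exactly the step you call the key one, and the gap starts in Step~2, where the equivariance group is never pinned down.

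The group you display, with the lower-left block $x$, is $\{w_{n,1}\,{}^{t}h^{-1}\,w_{n,1}^{-1} : h\in H\}$ for the group $H$ given below. It governs $W'\mapsto\widetilde{\Psi}(1-s;W',\chi^{-1})$, but $L_1$ is not quasi-invariant under it. Your ``economical'' alternative, generated by $\mathrm{diag}(h,I_{n-1})$ and $\mathrm{diag}(1,u)$ with $u\in\N(n-1,\E)$, is too small for Step~3 to hold. Already for $n=3$ and $\pi$ supercuspidal, each functional
\[
W\longmapsto\int_{\E^{\times}}W\left(\begin{pmatrix} rh&0&0\\ h&1&0\\ 0&0&1\end{pmatrix}\right)\chi(h)|h|^{s-1}\,dh,\qquad r\in\E^{\times},
\]
transforms exactly like $L_1$ under that group, and these functionals are linearly independent for every $s$. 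The correct group is
\[
H=\left\{\begin{pmatrix} a&0&y\\ 0&1&z\\ 0&0&u\end{pmatrix} : a\in\E^{\times},\ y,z\in\Mat(1\times(n-2),\E),\ u\in\N(n-2,\E)\right\}.
\]
It acts through $\chi(a)^{-1}|a|^{-s+\frac{n-1}{2}}$ on the torus and through $\psi_n$ on the unipotent part. The first-row entries $y$ are indispensable.

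Even with this $H$, your sentence that the functional ``is determined by its values on functions supported near the torus'' is the whole content of multiplicity one, and you give no reason for it. It cannot come from an open-orbit argument: $\N(n,\E)H$ lies in the upper-triangular part of $\text{P}(n,\E)$, which has empty interior for $n\ge 3$. The missing idea is a support argument that uses the unipotent part of $H$:
\begin{enumerate}
\item Write $\text{P}(n,\E)=\GL(n-1,\E)\ltimes V$, with $V\cong\E^{n-1}$ the last column, so that $V\subset H$.
\item View $\ind_{\N(n,\E)}^{\text{P}(n,\E)}\psi_n$ as functions on $\GL(n-1,\E)$. Then $v\in V$ acts by multiplication by $g\mapsto\psi_{\E}(e\,g\,v)$ with $e=(0,\dots,0,1)$, while the target character is $\psi_{\E}(e\,v)$.
\item Averaging over large compact open subgroups of $V$ shows that a quasi-invariant functional kills every function supported off $\{g : e\,g=e\}=\text{P}(n-1,\E)$. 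Hence it factors through restriction to $\ind_{\N(n-1,\E)}^{\text{P}(n-1,\E)}\psi_{n-1}$, on which $H\cap\GL(n-1,\E)$ is the same kind of group one size down.
\item Iterating down to $n=2$, where $\ind_{\N(2,\E)}^{\text{P}(2,\E)}\psi_2\cong C_c^{\infty}(\E^{\times})$, uniqueness of Haar measure gives dimension one for every $s$ when $\pi$ is supercuspidal.
\end{enumerate}
Without $y$, the support locus at the first stage leaves the first entry of $e\,g$ free, and that is exactly where the extra functionals above live.

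Two further points. For a general generic $\pi$, which is the setting of the statement, you would still have to show that the remaining Bernstein--Zelevinsky pieces of $\pi|_{\text{P}(n,\E)}$ contribute only for finitely many $q_{\E}^{-s}$; you only name this. Also, $L_1$ and $L_2$ converge on opposite half-planes, so the identity $L_2=c(s)L_1$ must be established for the rationally continued functionals, whose quasi-invariance persists as an identity of rational functions. This is harmless for supercuspidal $\pi$, the case the paper uses, since there both integrals are Laurent polynomials in $q_{\E}^{-s}$.
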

\begin{remark}
    \normalfont We recall from \cite{Gelfand, Rodier} that all supercuspidal representations of $\GL(n, \E)$ are generic.
\end{remark}

We end this subsection with the following result of Adrian and Liu that gives the twisted gamma factor between a simple supercuspidal representation $\pi_{(v,  \, \varphi,  \, \zeta)}$ and a tamely ramified quasi-character $\lambda$ of $\E^{\times}$, i.e., $\lambda$ is trivial on $1 + \mathcal{P}_{\E}$ \cite[Corollary 3.12]{Adrian-Liu}:
\begin{proposition}\label{TamelyRamified}
Let $\pi_{(v,  \, \varphi,  \, \zeta)}$ be a simple supercuspidal representation of $\GL(n, \E)$ and $\lambda$ a tamely ramified quasi-character of $\E^{\times}$.~Then 
\[
\gamma\left(s, \, \pi_{(v,  \, \varphi,  \, \zeta)} \times \lambda, \, \psi_{\E} \right) = \zeta^{-1} \, \lambda(-1)^{n-1}\lambda(v \varpi_{\E}) \, q_{\E}^{1/2-s}.
\]
\end{proposition}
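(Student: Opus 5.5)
The proof computes both Rankin--Selberg integrals in Theorem \ref{GammaFactor2.7} for one well-chosen Whittaker function attached to the type $\left(\textbf{J}_{v}, \Lambda_{(v, \, \varphi, \, \zeta)}\right)$. The gamma factor is then their ratio.

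\textbf{Choice of Whittaker function.} We start with the compatibility of characters. For $u \in \N(n,\E) \cap U^{1}(\mathfrak{I}_{n})$ we have $\psi_{\beta_{v}}(u) = \psi_{\E}\left(\operatorname{Tr}(\beta_{v}(u-1))\right)$. The subdiagonal ones of $\beta_{v}$ pick out the superdiagonal entries $u_{i,i+1}$. The corner entry $(v\varpi_{\E})^{-1}$ pairs with $u_{n,1}$, which vanishes for upper unipotent $u$. Hence $\psi_{\beta_{v}} = \psi_{n}$ on $\N(n,\E) \cap \textbf{J}_{v}$. We may therefore define $W$ supported on $\N(n,\E)\textbf{J}_{v}$ by
\[
W(uj) = \psi_{n}(u)\Lambda_{(v, \, \varphi, \, \zeta)}(j).
\]
The standard Mackey/Frobenius argument, with the unique double coset of $\N(n,\E)\backslash \GL(n,\E)/\textbf{J}_{v}$ supporting the Whittaker functional, shows that $W \in W(\pi_{(v, \, \varphi, \, \zeta)}, \psi_{n})$.

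\textbf{The integral $\Psi(s;W,\lambda)$.} We determine when $\operatorname{diag}(h, I_{n-1}) \in \N(n,\E)\textbf{J}_{v}$.
\begin{itemize}
\item For $h \in \mathcal{O}_{\E}^{\times}$ this happens exactly when $\overline{h} = 1$: modulo $\mathcal{P}_{\E}$, an element of $\mathcal{O}_{\E}^{\times}U^{1}(\mathfrak{I}_{n})$ has scalar diagonal.
\item For $h$ of nonzero valuation, we compare determinant valuations and the $\E_{v}^{\times}$-components. This should also exclude such $h$, since powers of $\beta_{v}$ are not diagonal modulo $\N(n,\E)\J_{v}$.
\end{itemize}
Since $\lambda$ is tamely ramified, $\Psi(s;W,\lambda) = \operatorname{vol}(1+\mathcal{P}_{\E})$, a nonzero constant independent of $s$.

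\textbf{The dual integral.} On the other side we must evaluate $\widetilde{W}$ translated by $w_{n,1}$ at
\[
g = \begin{pmatrix} h & & \\ x & I_{n-2} & \\ & & 1 \end{pmatrix},
\]
that is, decide when $w_{n}\,{}^{t}(g w_{n,1})^{-1} \in \N(n,\E)\textbf{J}_{v}$. I expect this support analysis to be the main obstacle. The plan is as follows.
\begin{enumerate}
\item Write $w_{n}\,{}^{t}g^{-1}w_{n,1}$ explicitly. Observe that $w_{n}w_{n,1}$ is, up to a diagonal sign, close to the permutation part of $\beta_{v}$.
\item Show that the only contributions come from $\operatorname{val}_{\E}(h) = 1$ and $x \in \Mat(n-2 \times 1, \mathcal{P}_{\E})$ (up to a unit constraint on $h$). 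There the matrix factors as $u \cdot \beta_{v}^{\pm 1} \cdot j$ with $j \in \J_{v}$, and $h$ is forced to have the shape $v\varpi_{\E}$ times a $1$-unit. I would first try to find this factorization directly; other valuations should fail by the same determinant/$\E_{v}$-component argument as before.
\end{enumerate}

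\textbf{Assembling.} On the support, $\Lambda$ contributes $\zeta^{-1}$ from $\beta_{v}^{-1}$, and the $\varphi$ parts cancel. The character $\lambda^{-1}$ evaluated at $h$ gives $\lambda(v\varpi_{\E})$ up to the sign $\lambda(-1)^{n-1}$ coming from $w_{n}$. Combined with the $\chi(-1)^{n-1}$ in Theorem \ref{GammaFactor2.7}, this yields the factor $\lambda(-1)^{n-1}$. The factor $|h|^{(1-s)-\sfrac{n-1}{2}}$ and the $x$-volume $q_{\E}^{-(n-2)}$, normalized against $\operatorname{vol}(1+\mathcal{P}_{\E})$, combine to $q_{\E}^{1/2-s}$. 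Dividing by $\Psi$ gives the stated formula.
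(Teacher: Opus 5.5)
Your strategy of evaluating both sides of Theorem~\ref{GammaFactor2.7} on the Whittaker function supported on $\N(n,\E)\textbf{J}_{v}$ is the right one; it is the same kind of computation as in the appendix. The $\Psi$-side is also fine, provided you exclude $\text{val}_{\E}(h)\neq 0$ properly, which Theorem~\ref{Support} does since $\operatorname{diag}(h,I_{n-1})\in\text{P}(n,\E)$, giving $\Psi=\operatorname{vol}(1+\mathcal{P}_{\E})$.

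The genuine gap is the dual integral, which is the heart of the proof. You only sketch it, and the support you predict is wrong. Put $y=-h^{-1}\,{}^{t}x$. Then $w_{n}\,{}^{t}g^{-1}w_{n,1}$ has rows $e_{2},\dots,e_{n}$ followed by $(h^{-1},0,y_{n-2},\dots,y_{1})$. At $x=0$ it is exactly $\beta_{v}^{-1}$ with $v\varpi_{\E}$ replaced by $h^{-1}$, with no signs. Its determinant is $\pm h^{-1}$, while $\det\beta_{v}^{-1}=\pm v\varpi_{\E}$. So the coset $\N(n,\E)\beta_{v}^{-1}\J_{v}$ you invoke forces $\text{val}_{\E}(h)=-1$, not $+1$.

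With your support $h\in v\varpi_{\E}(1+\mathcal{P}_{\E})$, the assembly gives $\lambda^{-1}(h)=\lambda(v\varpi_{\E})^{-1}$ and $|h|^{(1-s)-\frac{n-1}{2}}=q_{\E}^{\,s+\frac{n-3}{2}}$. That is the wrong character value and a positive power of $q_{\E}^{s}$, so the stated formula does not follow from your steps. Your sign bookkeeping is also inconsistent. $\lambda^{-1}(h)$ contributes no $\lambda(-1)^{n-1}$; that factor comes solely from the $\chi(-1)^{n-1}$ in Theorem~\ref{GammaFactor2.7}, and two such factors would cancel.

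What is missing is an argument that no other coset $\N(n,\E)\beta_{v}^{k}\J_{v}$ meets the integrand. Comparing determinants only pins down $k=\text{val}_{\E}(h)$ and cannot do this. One clean way is to right-multiply by $\beta_{v}$, which normalizes $\J_{v}$:
\begin{itemize}
\item The resulting matrix $L$ agrees with $I_{n}$ in its first $n-1$ rows and has last row $(0,y_{n-2},\dots,y_{1},(v\varpi_{\E}h)^{-1})$.
\item The first column of $L$ is $e_{1}$, and so is that of $u^{-1}L$ for every $u\in\N(n,\E)$.
\item Hence $u^{-1}L\in\beta_{v}^{m}\J_{v}$ would make $\beta_{v}^{-m}e_{1}$ the first column of an element of $\J_{v}$. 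That column must have a unit first entry and all other entries in $\mathcal{P}_{\E}$, which forces $m=0$.
\item Then $L\in\N(n,\E)\J_{v}$ if and only if $L\in U^{1}(\mathfrak{I}_{n})$. This holds exactly when $h\in(v\varpi_{\E})^{-1}(1+\mathcal{P}_{\E})$ and $x\in\Mat(n-2\times1,\mathcal{O}_{\E})$ (not $\mathcal{P}_{\E}$).
\item On this support the integrand equals $\zeta^{-1}$ and $\lambda^{-1}(h)=\lambda(v\varpi_{\E})$.
\end{itemize}

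Finally, watch the power of $q_{\E}$. With $\int_{\mathcal{O}_{\E}}dx=1$, this support yields $q_{\E}^{\frac{3-n}{2}-s}$. The exponent $\sfrac{1}{2}-s$ is what makes $|\gamma(\sfrac{1}{2})|=1$ for unitary data, as Theorem~\ref{TheoremMain} needs. It requires the $\psi_{\E}$-self-dual measure on $\Mat(n-2\times1,\E)$, for which $\operatorname{vol}(\mathcal{O}_{\E})=q_{\E}^{1/2}$.
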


\subsection{Distinction by $\GL(n, \F)$}\label{Distinction} In this subsection, we provide necessary conditions for a irreducible smooth complex representation $\tau$ of $\GL(n, \E)$ to be distinguished by $\GL(n, \F)$.

We say that $\tau$ is \textit{distinguished} by $\GL(n, \F)$ if the space
\[
\Hom_{\GL(n, \, \F)}\left(\tau, \mathbbm{1}\right)
\]
is non-zero, with $\mathbbm{1}$ denoting the trivial quasi-character of $\GL(n, \F)$.~Let $\sigma$ be the non-trivial automorphism of the Galois group $\text{Gal}\left(\E/\F\right)$.~Then $\tau$ is \textit{$\sigma$-self-dual} if 
 \begin{equation}\label{self-dual}
        \tau^{\vee} \cong \tau^{\sigma}
\end{equation}
where $\tau^{\vee}$ denotes the contragredient representation of $\tau$ and $\tau^{\sigma} = \tau \circ \sigma$.~If $\tau$ is distinguished by $\GL(n, \F)$, then $\tau$ satisfies the following \cite{Flicker, Hakim-Offen, Prasad1, Prasad2, Secherre}:
\begin{enumerate}
    \item $\tau$ is unitary,
    \item $\tau$ is $\sigma$-self-dual, and
    \item the central character of $\tau$ must be trivial on $\F^{\times}$.
\end{enumerate}

We end this subsection with the following result that will aid us in our computations \cite[Subsection 3.5]{Bushnell-Henniart}:
\begin{theorem}[Duality theorem]\label{Duality}
    \normalfont
    Let $H$ be a closed subgroup of $G$ and $\mu_{H \backslash G}$ a left-invariant Haar measure on $H \backslash G$.~Let $(\rho, W)$ be a smooth representation of $H$.~There is a natural isomorphism
    \begin{equation*}
        \left(\ind_{H}^{G}\rho \right)^{\vee} \cong \Ind_{H}^{G} \left(\delta_{H \backslash G} \otimes \rho^{\vee}\right),
    \end{equation*}
    where 
    \begin{equation*}
        \delta_{H \backslash G} = \delta_{H}^{-1}\delta_{G}\big|_{H} \, \colon \, H \to \mathbb{R}_{>0}^{\times}
    \end{equation*} 
    with $\delta_{G}$ and $\delta_{H}$ denoting the modulus quasi-characters of $G$ and $H$ respectively.
\end{theorem}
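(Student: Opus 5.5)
The plan is to build an explicit $G$-invariant pairing between $\ind_{H}^{G}\rho$ and $\Ind_{H}^{G}(\delta_{H\backslash G}\otimes\rho^{\vee})$, and then check that it identifies the second space with the full smooth dual of the first. The basic ingredient is the space $C_c(H\backslash G,\delta_{H\backslash G})$ of locally constant functions $\phi$ on $G$ that are compactly supported modulo $H$ and satisfy $\phi(hg)=\delta_{H\backslash G}(h)\phi(g)$. This space carries a unique (up to scalar) nonzero right $G$-invariant linear functional, which is what the notation $\mu_{H\backslash G}$ refers to.

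To construct this functional, I would use the averaging map
\[
P\colon C_c^{\infty}(G)\to C_c(H\backslash G,\delta_{H\backslash G}),\qquad Pf(g)=\int_{H} f(hg)\,\delta_{H\backslash G}(h)^{-1}\,d_{?}h,
\]
with $d_{?}h$ a Haar measure on $H$ chosen left or right so that $Pf$ has the required transformation law. I would then set $\mu(Pf)=\int_G f\,dg$. There are two things to check:
\begin{enumerate}
\item $P$ is surjective. I would show this by lifting $\phi$ against a compact open subgroup and a cut-off function.
\item $\mu$ is well defined, i.e.\ $Pf=0$ forces $\int_G f=0$. The standard route is a Fubini argument: integrate $f(g)\,P\eta(g)$ for a cut-off $\eta$ with $P\eta=1$ on the support of $f$, and swap the $H$- and $G$-integrals using $\delta_G$ and $\delta_H$.
\end{enumerate}
This Fubini step is exactly where the modulus character $\delta_{H\backslash G}=\delta_H^{-1}\delta_G|_H$ is forced, so I expect it to be the main obstacle. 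Most of the care goes into keeping the left/right Haar-measure conventions consistent with the paper's choice of a left-invariant measure on $H\backslash G$. Right $G$-invariance of $\mu$ then follows from right invariance of $dg$, up to $\delta_G$, which is absorbed by the same bookkeeping.

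Next, for $f\in\ind_H^G\rho$ and $F\in\Ind_H^G(\delta_{H\backslash G}\otimes\rho^{\vee})$, the function $g\mapsto\langle f(g),F(g)\rangle$ lies in $C_c(H\backslash G,\delta_{H\backslash G})$. Setting $\langle f,F\rangle:=\mu(g\mapsto\langle f(g),F(g)\rangle)$ gives a $G$-invariant pairing, and hence a $G$-map
\[
\Ind_H^G(\delta_{H\backslash G}\otimes\rho^{\vee})\to(\ind_H^G\rho)^{\vee}.
\]

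To prove this map is bijective, I would fix a compact open subgroup $K$ and compare $K$-fixed vectors on both sides, since a smooth dual is the union of the duals of the $K$-isotypic parts. Writing $G=\bigsqcup_{g\in H\backslash G/K}HgK$, the space $(\ind_H^G\rho)^K$ is the direct sum over double cosets of spaces $\rho^{H\cap gKg^{-1}}$, each embedded via functions supported on $HgK$. Correspondingly, $\Ind_H^G(\cdots)^K$ is the direct product of the spaces $(\rho^{\vee})^{H\cap gKg^{-1}}$. On each double coset the pairing restricts to a nonzero multiple of the natural pairing $\rho^{H\cap gKg^{-1}}\times(\rho^{\vee})^{H\cap gKg^{-1}}\to\mathbb{C}$. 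That pairing is perfect because $H\cap gKg^{-1}$ is compact, so taking invariants commutes with duality. The dual of a direct sum is the direct product, which gives injectivity and surjectivity at each level $K$, and hence the isomorphism after taking the union over $K$.
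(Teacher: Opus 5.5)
Your proposal is correct and is essentially the standard argument of Bushnell--Henniart, \S3.4--3.5, which the paper cites instead of giving its own proof: build the semi-invariant functional on $\ind_H^G\delta_{H\backslash G}$ by averaging from $C_c^{\infty}(G)$, pair, and compare $K$-fixed vectors double coset by double coset, using that the dual of a direct sum is the direct product and that $H\cap gKg^{-1}$ is compact open in $H$. One small repair is needed in the Fubini step: the cut-off must be the untwisted average $g\mapsto\int_H\eta(hg)\,dh$ with $dh$ a right Haar measure on $H$, not $P\eta$ itself, because $P\eta$ transforms by $\delta_{H\backslash G}$ and in general cannot be identically $1$ on the support of $f$, whereas the untwisted average is left $H$-invariant, can be taken equal to $1$ there, and swapping integrals then turns $\int_G f$ into $\int_G \eta\cdot Pf$.
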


\section{Proof of Main Results}\label{Proof}

In this section, we prove Theorem \ref{TheoremMain}.~To do so, we first introduce two supporting lemmas.
\begin{lemma}\label{Lemma1}
    Let $\pi_{(v,  \, \varphi,  \, \zeta)}$ be a simple supercuspidal representation of $\GL(n, \E)$ such that $\omega_{(v,  \, \varphi,  \, \zeta)}$ is trivial on $\F^{\times}$.~Suppose 
    \[
    \gamma\left(\sfrac{1}{2}, \, \pi_{(v,  \, \varphi,  \, \zeta)} \times \lambda, \, \psi_{\E} \right) =  1
    \]
    for all unitary, tamely ramified quasi-characters $\lambda$ of  $\E^{\times}$ that are trivial on $\F^{\times}$.~Then $\overline{v} \in k_{\F}^{\times}$, 
    $\varphi$ is trivial on $k_{\F}^{\times}$, and $\zeta = 1$.
\end{lemma}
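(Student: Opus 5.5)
The plan is to evaluate Proposition \ref{TamelyRamified} at $s=\sfrac{1}{2}$ and use the freedom in choosing $\lambda$ to pin down each parameter separately. At $s = \sfrac{1}{2}$ the factor $q_{\E}^{1/2-s}$ equals $1$, so the hypothesis reads
\[
\zeta^{-1}\,\lambda(-1)^{n-1}\,\lambda(v\varpi_{\E}) = 1
\]
for every unitary, tamely ramified $\lambda$ of $\E^{\times}$ that is trivial on $\F^{\times}$. Since $-1 \in \F^{\times}$, we have $\lambda(-1)=1$. Since $\E/\F$ is unramified, a uniformizer of $\F$ is a uniformizer of $\E$, so I will take the fixed $\varpi_{\E}$ to lie in $\F$. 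If the convention forces a different choice, write $\varpi_{\E}=u\varpi_{\F}$ with $u \in \mathcal{O}_{\E}^{\times}$ and absorb $u$ below. With $\varpi_{\E} \in \F$ we get $\lambda(\varpi_{\E})=1$, and the condition becomes $\lambda(v)=\zeta$ for all such $\lambda$.

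\textbf{Step 1: $\zeta = 1$.} Take $\lambda$ to be the trivial quasi-character, which is unitary, tamely ramified and trivial on $\F^{\times}$. This gives $\zeta=1$ immediately.

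\textbf{Step 2: $\overline{v} \in k_{\F}^{\times}$.} Now $\lambda(v)=1$ for every admissible $\lambda$. These $\lambda$ are exactly the characters of the group
\[
\E^{\times}/\F^{\times}(1+\mathcal{P}_{\E}) \;\cong\; \mathcal{O}_{\E}^{\times}/\mathcal{O}_{\F}^{\times}(1+\mathcal{P}_{\E}) \;\cong\; k_{\E}^{\times}/k_{\F}^{\times}.
\]
The first isomorphism uses $\E^{\times}=\varpi_{\E}^{\mathbb{Z}}\mathcal{O}_{\E}^{\times}$ with $\varpi_{\E}\in\F$. This group is finite abelian, so all its characters are automatically unitary, and they separate points. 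Hence the image of $v$ in it is trivial, which means $\overline{v}\in k_{\F}^{\times}$. If $\varpi_{\E}=u\varpi_{\F}$ with $u \notin \F$, the same argument gives $\overline{vu}\in k_{\F}^{\times}$; this is why I will insist on the normalization $\varpi_{\E}\in\F$.

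\textbf{Step 3: $\varphi$ trivial on $k_{\F}^{\times}$.} This does not use the gamma factors at all. The central character satisfies $\omega_{(v,\,\varphi,\,\zeta)}=\varphi$ on $\mathcal{O}_{\E}^{\times}$, and by hypothesis it is trivial on $\F^{\times}\supset\mathcal{O}_{\F}^{\times}$. Since $\mathcal{O}_{\F}^{\times}$ surjects onto $k_{\F}^{\times}$, $\varphi$ is trivial on $k_{\F}^{\times}$.

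\textbf{Main obstacle.} The only real subtlety is Step 2. It needs the identification of the admissible $\lambda$ with the dual of $k_{\E}^{\times}/k_{\F}^{\times}$, and that identification depends on the uniformizer normalization. I would state explicitly that $\varpi_{\E}$ is chosen in $\F$; this is permissible because $\E/\F$ is unramified.
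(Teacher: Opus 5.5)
Your proposal is correct and follows essentially the same route as the paper. Both arguments evaluate Proposition \ref{TamelyRamified} at $s=1/2$, take $\lambda$ trivial to get $\zeta=1$, and apply duality for the finite abelian group $\E^{\times}/\F^{\times}(1+\mathcal{P}_{\E})$ with $\varpi_{\E}=\varpi_{\F}$ to get $\overline{v}\in k_{\F}^{\times}$; the condition on $\varphi$ is then read off from the central character being trivial on $\F^{\times}$. Your remarks that $\lambda(-1)=1$ and that this quotient is $k_{\E}^{\times}/k_{\F}^{\times}$ simply make explicit steps the paper leaves implicit.
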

\begin{proof}
    Assume that all twisted gamma factors in the statement of the lemma are equal to 1.~From Proposition \ref{TamelyRamified}, we have
    \begin{equation}\label{Gamma}
            \gamma\left(\sfrac{1}{2}, \, \pi_{(v,  \, \varphi,  \, \zeta)} \times \lambda, \, \psi_{\E} \right) = \zeta^{-1}\lambda(v \varpi_{\E}) = 1.
    \end{equation}
    Setting $\lambda = \mathbbm{1}_{\E^{\times}}$, the trivial quasi-character of $\E^{\times}$, we see that $\zeta = 1$.~This fact along with \eqref{Gamma} imply that $\lambda(v \varpi_{\E}) = 1$ for all unitary, tamely ramified quasi-characters $\lambda$ of $\E^{\times}$ that are trivial on $\F^{\times}$.
    
   Next, we note that such $\lambda$ may be regarded as quasi-characters of the finite abelian group 
    \[
    G := \bigslant{\E^{\times}}{\F^{\times}\left(1 + \mathcal{P}_{\E}\right)}.
    \]
    From the Pontryagin duality theorem \cite[p. 53]{Morris}, we have that $G$ is isomorphic to its double dual.~Because the element $v \varpi_{\E}$ lies in the intersection of the kernels of all $\lambda$, this implies $v \varpi_{\E} \in \F^{\times}\left(1 + \mathcal{P}_{\E}\right)$.
    
    Since $\E/\F$ is unramified, we may take $\varpi_{\E} = \varpi_{\F}$.~This further implies that $v \in \mathcal{O}_{\F}^{\times}(1 + \mathcal{P}_{\E})$.~Therefore, Lemma \ref{Lemma1} follows as $\omega_{(v,\,\varphi,\,\zeta)}$ is trivial on $\F^{\times}$.
\end{proof}

\begin{lemma}\label{Lemma2}
    Let $\pi_{(v,  \, \varphi,  \, \zeta)}$ be a simple supercuspidal representation of $\GL(n, \E)$.~Then $\pi_{(v,  \, \varphi,  \, \zeta)}$ is $\sigma$-self-dual if and only if $\overline{v} \in k_{\F}^{\times}$, $\varphi$ is trivial on $k_{\F}^{\times}$ and $\zeta = \pm 1$.
\end{lemma}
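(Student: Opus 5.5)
Since $\pi^{\vee}$ and $\pi^{\sigma}$ are again simple supercuspidal, the plan is to compute their parameters in $\T_{\text{simple}}$ and compare them using Proposition \ref{Bijection}.

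\textbf{Step 1: the Galois twist.} Because $\E/\F$ is unramified, we take $\varpi_{\E}=\varpi_{\F}\in\F$ and apply $\sigma$ entrywise to matrices. Then $\sigma(\beta_{v})=\beta_{\sigma(v)}$, $\sigma$ preserves $\mathfrak{I}_{n}$, $U^{1}(\mathfrak{I}_{n})$ and $\J_{v}$, and $\sigma(\textbf{J}_{v})=\textbf{J}_{\sigma(v)}$. Since $\psi_{\E}$ is trivial on $\F$, we have $\psi_{\E}(x)=\psi_{\E}(\sigma x)$: indeed $x+\sigma x\in\F$, and this combined with $\psi_{\E}(x+\sigma x)=1$ gives $\psi_{\E}(\sigma x)=\psi_{\E}(x)^{-1}$. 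So I must be careful here. Writing $\Lambda\circ\sigma^{-1}$ on $\textbf{J}_{\sigma(v)}$, we get
\[
\psi_{\beta_{v}}(\sigma^{-1}y)=\psi_{\E}\bigl(\sigma^{-1}\mathrm{Tr}(\beta_{\sigma v}(y-1))\bigr)=\psi_{\beta_{\sigma v}}(y)^{-1}=\psi_{-\beta_{\sigma v}}(y).
\]
I would then conjugate by the diagonal matrix $\mathrm{diag}(1,-1,1,\dots)$ to turn $-\beta_{\sigma v}$ into a matrix of the form $\beta_{v'}$, with $v'=\pm\sigma(v)$ depending on the parity of $n$. This conjugation tracks how $\zeta$ changes, and $\varphi$ becomes $\varphi\circ\sigma$. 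The result should be $\pi^{\sigma}\cong\pi_{(\epsilon\sigma(v),\,\varphi^{\sigma},\,\epsilon'\zeta)}$ for explicit signs $\epsilon,\epsilon'$.

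\textbf{Step 2: the contragredient.} By Theorem \ref{Duality}, with $\textbf{J}_{v}$ compact mod centre and $\delta=1$, we have $\pi^{\vee}\cong\ind_{\textbf{J}_{v}}^{\GL(n,\E)}\Lambda^{-1}$. Now $\Lambda^{-1}$ restricted to $U^{1}$ is $\psi_{-\beta_{v}}$, and it acts by $\varphi^{-1}$ on $\mathcal{O}_{\E}^{\times}$ and sends $\beta_{v}$ to $\zeta^{-1}$. The same diagonal conjugation rewrites this in standard form, giving $\pi^{\vee}\cong\pi_{(\epsilon v,\,\varphi^{-1},\,\epsilon'\zeta^{-1})}$ with the same signs as in Step 1. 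This matches the known description of the dual of a simple supercuspidal; see, e.g., \cite{Adrian-Liu}.

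\textbf{Step 3: comparison.} By Proposition \ref{Bijection}, $\sigma$-self-duality is equivalent to the three conditions
\[
\overline{\sigma(v)}=\overline{v},\qquad \varphi^{\sigma}=\varphi^{-1},\qquad \zeta=\zeta^{-1}.
\]
The signs cancel because they occur identically on both sides. These conditions read as follows:
\begin{itemize}
\item $\overline{v}\in k_{\F}^{\times}$;
\item $\varphi(x^{q_{\F}+1})=1$, i.e.\ $\varphi$ is trivial on $N(k_{\E}^{\times})=k_{\F}^{\times}$;
\item $\zeta=\pm1$.
\end{itemize}

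\textbf{Main obstacle.} The delicate point is the sign bookkeeping in Steps 1 and 2. This covers the effect of $\psi_{\E}\circ\sigma=\psi_{\E}^{-1}$ and the diagonal conjugation on $\beta$. It also covers the induced modification of $\zeta$ via $\beta_{v}^{n}=(v\varpi_{\E})^{-1}$ and of $\varphi(v)$ in $\Lambda(\varpi_{\E})$. Both twists undergo the same sign change, so I expect the signs to cancel cleanly. As a cross-check I would use Proposition \ref{TamelyRamified}: for self-dual $\pi$, the gamma factors must satisfy the functional equation $\gamma(s,\pi\times\lambda)\gamma(1-s,\pi^{\vee}\times\lambda^{-1})=\lambda(-1)^{n}$. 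This would confirm the $\zeta$-transformation rule.
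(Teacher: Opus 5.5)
Your proposal is correct and is essentially the paper's argument: $\pi^{\vee}$ via the duality theorem, $\pi^{\sigma}$ via $\psi_{\E}\circ\sigma=\psi_{\E}^{-1}$, the same diagonal conjugation taking $-\beta_{w}$ to $\beta_{(-1)^{n}w}$, and comparison through Proposition~\ref{Bijection}. Concretely, your signs are $\epsilon=(-1)^{n}$ and $\epsilon'=\varphi(-1)$ on both sides (note $\varphi^{\sigma}(-1)=\varphi(-1)$), i.e.\ $\pi^{\vee}\cong\pi_{((-1)^{n}v,\,\varphi^{-1},\,\varphi(-1)\zeta^{-1})}$ and $\pi^{\sigma}\cong\pi_{((-1)^{n}\sigma(v),\,\varphi^{\sigma},\,\varphi(-1)\zeta)}$, exactly as the paper finds, so the cancellation you anticipate does occur.

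Your optional cross-check needs one fix. The functional equation holds for every $\pi$, not only self-dual ones, and it reads $\gamma(s,\pi\times\lambda,\psi_{\E})\,\gamma(1-s,\pi^{\vee}\times\lambda^{-1},\psi_{\E})=\omega_{\pi}(-1)\lambda(-1)^{n}$. Without the factor $\omega_{\pi}(-1)=\varphi(-1)$, it would wrongly predict $\zeta\mapsto\zeta^{-1}$ for the dual.
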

\begin{proof}
    We first suppose that $n$ is odd.~From Theorem \ref{Duality}, we have
    \[
    \pi_{(v,  \, \varphi,  \, \zeta)}^{\vee} \cong \ind_{\textbf{J}_{v}}^{\GL(n, \E)}\Lambda_{(v,  \, \varphi,  \, \zeta)}^{-1} \,.
    \]
    This implies that the simple stratum corresponding to $\pi_{(v,  \, \varphi,  \, \zeta)}^{\vee}$ is of the form $\left[\mathfrak{I}_{n}, 1, 0, -\beta_{v} \right]$. 
    
    Moreover, we note that $-\beta_{v}$ is conjugate to $\beta_{-v}$ by the diagonal matrix $g$ defined by: 
    \begin{equation}\label{g}
        g = \begin{pmatrix}
            -1 & & & & \\
            & 1 & & & \\
            & & -1 & & \\
            & & & \ddots & \\
            & & & & (-1)^{n}
        \end{pmatrix}
    \end{equation}
    and
    \[
        \Lambda_{(v,\,\varphi,\,\zeta)}^{-1}(-\beta_v)
        =   \varphi(-1) \, \zeta^{-1}.
    \]
    Using the results in \cite[Chapter 6]{Bushnell-Kutzko} and Subsection \ref{SimpleSupercuspidalRepresentations}, we have
    \begin{equation}\label{Contra}
        \pi_{(v,  \, \varphi,  \, \zeta)}^{\vee} \cong \pi_{\left(-v,  \, \varphi^{-1},  \, \varphi(-1) \, \zeta^{-1} \right)}.
    \end{equation}
    
    By definition, we have that $\varphi^\sigma:=\varphi\circ\sigma$.~Since $\psi_{\E}\circ\sigma=\psi_{\E}^{-1}$, the simple character occurring in $\left(\Lambda_{(v,\,\varphi,\,\zeta)}\circ\sigma\right)$ is associated with $-\beta_{\sigma(v)}$.~Conjugating again by $g$, we obtain
\begin{equation}\label{Galois}
    \pi_{(v,\,\varphi,\,\zeta)}^{\sigma}
    \cong
    \pi_{\left(
        -\sigma(v),\,
        \varphi^\sigma,\,
        \varphi(-1) \, \zeta
    \right)}.
\end{equation}
    From \eqref{self-dual}, (\ref{Contra}), and (\ref{Galois}),
    \[
    \pi_{\left(-\sigma(v),  \, \varphi^{\sigma},  \, \varphi(-1) \, \zeta \right)} \cong \pi_{\left(-v,  \, \varphi^{-1},  \, \varphi(-1) \, \zeta^{-1} \right)}.
    \]
    Using Proposition \ref{Bijection}, we have $-\sigma(v) \equiv -v \pmod{\mathcal{P}_{\E}}$ which implies $\overline{v} \in k_{\F}^{\times}$. 

    If $\varphi^{-1} = \varphi^{\sigma}$, then $\varphi\left(N_{\E/\F}(x)\right) = 1$ for all $x \in \mathcal{O}_{\E}^{\times}$.~Such $\varphi$ are exactly those quasi-characters of $k_{\E}^{\times}$ which are trivial on $k_{\F}^{\times}$.~Moreover, since $\zeta = \zeta^{-1}$, we have that $\zeta = \pm 1$.

    Next, suppose that $n$ is even.~Using similar logic as above, we see that $-\beta_{v}$ is conjugate to $\beta_{v}$ by $g$ given in \eqref{g} and 
    \[
        \pi_{(v,  \, \varphi,  \, \zeta)}^{\vee} \cong \pi_{\left(v,  \, \varphi^{-1},  \, \varphi(-1) \, \zeta^{-1} \right)}.
    \]
    From \eqref{self-dual}, we have that
    \[
        \pi_{\left(\sigma(v),  \, \varphi^{\sigma},  \, \varphi(-1) \, \zeta \right)} \cong \pi_{\left(v,  \, \varphi^{-1},  \, \varphi(-1) \, \zeta^{-1} \right)}.
    \]
    It follows from Proposition \ref{Bijection} that $\sigma(v) \equiv v \pmod{\mathcal{P}_{\E}}$, $\varphi^{-1} = \varphi^{\sigma}$, and $\zeta = \zeta^{-1}$.
    
    If $\sigma(v) \equiv v \pmod{\mathcal{P}_{\E}}$, then $\overline{v} \in k_{\F}^{\times}$.~Finally, the same conditions on $\varphi$ and $\zeta$ hold as in the odd case.

    Conversely, suppose that $\overline{v}\in k_{\F}^{\times}$, $\varphi \big|_{k_{\F}^{\times}}=1$, and $\zeta=\pm1$.~Then $\overline{\sigma(v)}=\overline v$, $\varphi^{\sigma}=\varphi^{-1}$, and $\zeta=\zeta^{-1}$.~The preceding descriptions of $\pi_{(v,\,\varphi,\,\zeta)}^{\sigma}$ and $\pi_{(v,\,\varphi,\,\zeta)}^{\vee}$, together with Proposition \ref{Bijection}, therefore imply that
\[
\pi_{(v,\,\varphi,\,\zeta)}^{\sigma}
\cong
\pi_{(v,\,\varphi,\,\zeta)}^{\vee}. \qedhere
\]
\end{proof}

The preceding two lemmas now enable us to prove Theorem \ref{TheoremMain}.
\begin{proof}[Proof of Theorem \ref{TheoremMain}]
    If $\pi_{(v,  \, \varphi,  \, \zeta)}$ is distinguished by $\GL(n, \F)$, then (2) follows from Theorem \ref{n-2} and \cite[Theorem 4.1]{Hakim}.~Assuming (2), we have that (3) follows from Lemma \ref{Lemma1}.~Finally, we show that (3) implies (1).
    
    From Lemma \ref{Lemma2}, we have that $\pi_{(v,  \, \varphi,  \, \zeta)}$ is $\sigma$-self-dual.~Using Proposition \ref{Bijection}, we may assume that $v \in \mathcal{O}_{\F}^{\times}$.~Moreover, \cite[Corollary 9.6]{Secherre} tells us that any distinguished representation of $\textbf{J}_{v} = \E\left[\beta_{v}\right]^{\times} \J^{1}$ extending $\eta := \psi_{\beta_{v}}$ is $\sigma$-self-dual.~Adopting the notation in \cite[Subsection 5.2]{Secherre}, we have that $\boldsymbol\lambda := \Lambda_{(v,  \, \varphi,  \, \zeta)}$ may be decomposed as
\[
\boldsymbol\lambda = \boldsymbol\kappa \otimes \boldsymbol\rho
\]
where $\boldsymbol\kappa$ is of the following form:~first, we uniquely write an element $x \in \textbf{J}_{v}$ as $x = \beta_{v}^{k} y z$ with $k \in \mathbb{Z}$, $y \in \mu_{\E}'$, and $z \in \J^{1}$.~Next, we set
\[
\boldsymbol\kappa(x) = \psi_{\beta_{v}}(z).
\]
From $\boldsymbol\kappa$, we have that $\boldsymbol\rho$ is given by
\[
\boldsymbol\rho(x) = \zeta^{k}\varphi(y) = \varphi(y).
\]

We claim that $\boldsymbol\kappa$ is distinguished in the sense of \cite{Secherre}.~Once it is distinguished, we have from \cite[Corollary 9.6]{Secherre} that it is also $\sigma$-self-dual.~For elements $x \in \textbf{J}_{v} \cap \GL(n, \F)$, 
\[
\boldsymbol\kappa(x) = \psi_{\beta_{v}}(z)
\]
where $z = (z_{i, j}) \in \J^{1}$ with $z_{i, j} \in \mathcal{O}_{\F}$.~ Hence, 
\[
\psi_{\beta_{v}}(z) = \psi_{\E}\left(\sum_{i=1}^{n-1}z_{i, i+1} + z_{n, 1}v^{-1}\right) = 1
\]
as $\psi_{\E}$ is trivial on $\F$; this proves our claim.

The quasi-character $\boldsymbol\rho$ arises from a certain tamely ramified character $\xi$ of an extension field $\K$ of $\E[\beta_v]$.~Furthermore, the pair $\left(\K/\E[\beta_v], \, \xi \right)$ is called an \textit{admissible pair of level zero} (see \cite[Definition 5.6]{Secherre} for a definition).~In our situation, we have that $\K :=  \E[\beta_v]$ and $\xi$ is a quasi-character of $\E[\beta_v]^\times$ given by
\begin{equation}\label{xiK}
\xi(\beta_v^k yz)=\zeta^k\varphi(y)=\varphi(y),
\end{equation}
with $y\in\mu_{\E}'$ and $z\in 1+\mathcal P_{\E}$.

Let $\D_{0} := F\left[\beta_{v}\right]$.~From \cite[Theorem 10.9]{Secherre}, we have that $\pi_{(v,  \, \varphi,  \, \zeta)}$ is distinguished by $\GL(n, \F)$ if and only if $\xi\big|_{\D_{0}^{\times}}$ is trivial ($\xi\big|_{\D_{0}^{\times}}$ is the quasi-character $\delta_{0}$ of $\D_{0}^{\times}$ in \cite[Theorem 10.9]{Secherre}).~Since $v \in \mathcal{O}_{\F}^{\times}$, it follows that $\D_{0}$ is a degree $n$ totally ramified field extension of $\F$.~Hence, restricting $\xi$ to $\D_{0}^{\times}$ amounts to taking $y \in \mu_{\F}'$ in (\ref{xiK}).~This implies $\xi\big|_{\D_{0}^{\times}}$ is trivial as $\varphi$ is trivial on $\mathcal{O}_{\F}^{\times}$.
\end{proof}


\appendix
\section{}\label{Appendix}

For a simple supercuspidal representation $\pi_{(v,  \, \varphi,  \, \zeta)} $ of $\GL(n, \E)$, we define an explicit Whittaker function $\mathcal{W}_{(v,  \, \varphi,  \, \zeta)} \in W\left(\pi_{(v,  \, \varphi,  \, \zeta)}, \psi_{\E}\right)$ via the following \cite[Section 3.3]{Adrian-Liu}, \cite[Example 2.23]{Ye}:

\hspace{0pt}\resizebox{1.0\linewidth}{!}{
  \begin{minipage}{\linewidth}
\begin{align*}
    g \mapsto  \begin{cases} 
      \hfill \psi_{n}(u)  \Lambda_{\left(v, \, \varphi, \, \zeta\right)}(h) &, \text{ if } g = uh \in \N(n, \E) \, \textbf{J}_{v} \text{ with } u \in \N(n, \E), \, h \in \textbf{J}_{v} \\
      \hfill 0 & ,  \text{ otherwise}
   \end{cases}.
\end{align*}
\end{minipage}
}
Next, we give a useful property about the support $\Su\left(\mathcal{W}_{\left(v, \, \varphi, \, \zeta\right)}\right)$ of $\mathcal{W}_{(v,  \, \varphi,  \, \zeta)}$ on $\text{P}(n, \E)$ that we will refer to in our calculations \cite[Theorem 5.8]{Paskunas}:
\begin{theorem}\label{Support}
\normalfont
    The support of $\mathcal{W}_{\left(v, \, \varphi, \, \zeta\right)}$ satisfies:
    \[
    \Su\left(\mathcal{W}_{(v,  \, \varphi,  \, \zeta)}\right) \cap \text{P}(n, \E) = \N(n, \E)\left(U^{1}\left(\mathfrak{I}_{n}\right) \cap \text{P}(n, \E)\right)
    \]
    and 
    \[
        \mathcal{W}_{(v,  \, \varphi,  \, \zeta)}(um) = \psi_{n}(u)\psi_{\beta_{v}}(m)
    \]
    for all $u \in \N(n, \E)$ and $ m \in U^{1}\left(\mathfrak{I}_{n}\right) \cap \text{P}(n, \E)$.
\end{theorem}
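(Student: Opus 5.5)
The natural route is Paskunas's characterization of the Kirillov/Whittaker restriction of a supercuspidal built from a type with a Whittaker-compatible character; we only need to check that it applies to the explicit function $\mathcal{W}_{(v,\,\varphi,\,\zeta)}$. The plan has two parts: first the support statement on $\text{P}(n,\E)$, then the formula for the values.

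\emph{Support.} By definition, $\mathcal{W}_{(v,\,\varphi,\,\zeta)}$ is supported on the double coset $\N(n,\E)\,\textbf{J}_{v}$. It is well defined because $\psi_{n}$ and $\Lambda_{(v,\,\varphi,\,\zeta)}$ agree on $\N(n,\E)\cap\textbf{J}_{v} = \N(n,\E)\cap U^{1}(\mathfrak{I}_{n})$. To see this, note that for $u\in \N(n,\E)\cap U^{1}(\mathfrak{I}_{n})$ we have
\[
\operatorname{Tr}\bigl(\beta_{v}(u-1)\bigr)=\sum_{i=1}^{n-1}u_{i,i+1},
\]
since $u-1$ is strictly upper triangular and the only entry of $\beta_v$ above the diagonal is the corner entry $(v\varpi_{\E})^{-1}$, which pairs with $u_{n,1}=0$. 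So I reduce the support statement to the identity
\[
\N(n,\E)\,\textbf{J}_{v}\cap \text{P}(n,\E)=\N(n,\E)\bigl(U^{1}(\mathfrak{I}_{n})\cap\text{P}(n,\E)\bigr).
\]
Write $g=uh$ with $h=\beta_{v}^{k}yz$, where $y\in\mu_{\E}'$ and $z\in U^{1}(\mathfrak{I}_n)$. Since $\N(n,\E)\subset\text{P}(n,\E)$, the condition $g\in\text{P}(n,\E)$ is the same as $h\in\text{P}(n,\E)$, i.e. the last row of $h$ equals $(0,\dots,0,1)$. I will compare valuations: $\det h$ has valuation $k$, while every element of $\text{P}(n,\E)\cap\mathfrak{I}_n$-cosets has unit determinant. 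From this I expect to force $k=0$. Then $h=yz$ has last row $y\cdot(\text{last row of } z)$, and the $(n,n)$ entry of $z$ lies in $1+\mathcal{P}_{\E}$. Hence $y=1$ and $z\in\text{P}(n,\E)$.

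The main obstacle is the valuation argument. Powers $\beta_v^{k}$ can be multiplied by $\N(n,\E)$ on the left, and $\det u=1$ but $u$ is unbounded, so naive determinant arguments need care. I would use the fact that $\det(uh)=\det h$ has valuation exactly $k$ (as $\det\beta_v=\pm(v\varpi_\E)^{-1}$). A matrix in $\text{P}(n,\E)$ can have any determinant, so instead I would argue with the last row. Left multiplication by $\N(n,\E)$ does not change the last row, so the last row of $h=\beta_v^{k}yz$ must be $(0,\dots,0,1)$. The last row of $\beta_v^{k}$ is, up to a power of $\varpi_\E$, a standard basis vector $e_{j}$ with $j\equiv n-k \pmod n$. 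The last row of $h$ is then that scalar times $y$ times row $j$ of $z$, and for $z\in U^1(\mathfrak I_n)$ the diagonal entry of row $j$ is a unit while the entries to its left are in $\mathcal P_\E$. For the result to be $e_n$, we need $j=n$, so $k\equiv0 \pmod n$, and the scalar must have valuation zero; this forces $k=0$.

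\emph{Values.} Once $g=um$ with $m\in U^{1}(\mathfrak{I}_n)\cap\text{P}(n,\E)$, the definition gives $\mathcal{W}(um)=\psi_n(u)\Lambda(m)=\psi_n(u)\psi_{\beta_v}(m)$, because $\Lambda$ restricted to $U^{1}(\mathfrak{I}_n)$ is $\psi_{\beta_v}$. The reverse inclusion of the support is immediate.
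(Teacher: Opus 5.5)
Your argument is correct, but it differs from the paper, which gives no proof of its own. The paper simply imports the statement from \cite[Theorem 5.8]{Paskunas}, a general result of Paskunas--Stevens on the restriction to the mirabolic of the explicit Whittaker function attached to an arbitrary extended maximal simple type. In that setting $\textbf{J}$ and $\Lambda$ are much less explicit than here.

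You instead verify the key identity $\N(n,\E)\textbf{J}_{v}\cap\text{P}(n,\E)=\N(n,\E)\bigl(U^{1}(\mathfrak{I}_{n})\cap\text{P}(n,\E)\bigr)$ by hand, using $\textbf{J}_{v}=\beta_{v}^{\mathbb{Z}}\mu_{\E}'U^{1}(\mathfrak{I}_{n})$:
\begin{enumerate}
\item left multiplication by $\N(n,\E)$ fixes the last row;
\item the last row of $\beta_{v}^{qn+r}$ with $0\le r<n$ is $(v\varpi_{\E})^{-q}e_{n-r}$;
\item the unit diagonal of $z\in U^{1}(\mathfrak{I}_{n})$ then forces $r=0$, $q=0$ and $y=1$.
\end{enumerate}
The value formula follows because $\Lambda_{(v,\,\varphi,\,\zeta)}$ restricts to $\psi_{\beta_{v}}$ on $U^{1}(\mathfrak{I}_{n})$.

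Your route is complete and elementary. It also shows why the statement holds for simple supercuspidals: everything reduces to the monomial shape of $\beta_{v}$. The citation buys generality. It also supplies the fact that the explicit function really lies in the Whittaker model of $\pi_{(v,\,\varphi,\,\zeta)}$. The appendix uses that fact, but it is not part of this statement, so your argument need not address it.

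A few small clean-ups:
\begin{enumerate}
\item Delete the abandoned determinant sentence, which as written is not meaningful.
\item The equality $\N(n,\E)\cap\textbf{J}_{v}=\N(n,\E)\cap U^{1}(\mathfrak{I}_{n})$, which you assert for well-definedness, follows from your own last-row argument because $\N(n,\E)\subset\text{P}(n,\E)$. It is cleaner to derive it after that argument.
\item Your opening appeal to Paskunas's characterization is never actually used and can be dropped.
\end{enumerate}
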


Let $\lambda$ be a depth-one quasi-character of $\E^{\times}$, then $\lambda$ is trivial on $1 + \mathcal{P}_{\E}^{2}$, but non-trivial on $1 + \mathcal{P}_{\E}$.~Hence, there exists $c_{\lambda} \in \mathcal{O}_{\E}^{\times}$ such that for $x \in \mathcal{O}_{\E}$,
\begin{equation}\label{Depth-One}
    \lambda(1 + \varpi_{\E}x) = \psi_{\E}(c_{\lambda}x).
\end{equation}
We first compute the twisted gamma factor between $\pi_{(v,  \, \varphi,  \, \zeta)} $ a simple supercuspidal representation of $\GL(2, \E)$ and $\lambda$.~Let $\alpha = \begin{pmatrix}
    1 & -c_{\lambda}\varpi_{\E}^{-1} \\ 
    & 1
\end{pmatrix}$.
\begin{proposition}\label{PropA1}
    Let $\pi_{(v,  \, \varphi,  \, \zeta)}$ be a simple supercuspidal representation of $\GL(2, \E)$ and $\lambda$ a depth-one quasi-character of $\E^{\times}$.~Then
    \[
    \Psi\left(s; \, \rho(\alpha)\mathcal{W}_{\left(v, \, \varphi, \, \zeta\right)}, \, \lambda\right) = \psi_{\E}\left(-c_{\lambda}\varpi_{\E}^{-1}\right) .
    \]
\end{proposition}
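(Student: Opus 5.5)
We compute the integral directly for $n=2$, where
\[
\Psi\left(s; \rho(\alpha)\mathcal{W}, \lambda\right) = \int_{\E^{\times}} \mathcal{W}\left(\begin{pmatrix} h & \\ & 1\end{pmatrix}\alpha\right)\lambda(h)|h|^{s-1/2}\,dh
\]
and $\mathcal{W} = \mathcal{W}_{(v,\,\varphi,\,\zeta)}$. The first step is to move $\alpha$ past the diagonal element:
\[
\begin{pmatrix} h & \\ & 1\end{pmatrix}\alpha = \begin{pmatrix} 1 & -hc_{\lambda}\varpi_{\E}^{-1} \\ & 1\end{pmatrix}\begin{pmatrix} h & \\ & 1\end{pmatrix}.
\]
The left factor lies in $\N(2,\E)$, so the integrand becomes
\[
\psi_{\E}\left(-hc_{\lambda}\varpi_{\E}^{-1}\right)\,\mathcal{W}\left(\begin{pmatrix} h & \\ & 1\end{pmatrix}\right)\lambda(h)|h|^{s-1/2}.
\]

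Next we use Theorem \ref{Support}, since $\mathrm{diag}(h,1)\in \text{P}(2,\E)$. It lies in the support $\N(2,\E)\left(U^{1}(\mathfrak{I}_{2})\cap \text{P}(2,\E)\right)$ exactly when $h\in 1+\mathcal{P}_{\E}$. Indeed, writing $\mathrm{diag}(h,1)=um$ and comparing lower rows forces $m$ to be upper triangular with $m_{22}=1$, so $m_{11}=h\in 1+\mathcal{P}_{\E}$. On this set the value is $\psi_{\beta_v}(\mathrm{diag}(h,1))$. This equals $\psi_{\E}$ of the trace of $\beta_v\,\mathrm{diag}(h-1,0)$, which is $0$ because $\beta_v$ has zero diagonal. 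So the value is $1$.

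With the support restricted to $h=1+\varpi_{\E}x$, $x\in\mathcal{O}_{\E}$, we have $|h|=1$, and \eqref{Depth-One} gives $\lambda(h)=\psi_{\E}(c_{\lambda}x)$. The integrand is then
\[
\psi_{\E}\left(-c_{\lambda}\varpi_{\E}^{-1}-c_{\lambda}x\right)\psi_{\E}(c_{\lambda}x)=\psi_{\E}\left(-c_{\lambda}\varpi_{\E}^{-1}\right),
\]
which is constant in $h$. This cancellation is precisely why $\alpha$ was chosen this way. The integral therefore equals $\psi_{\E}(-c_{\lambda}\varpi_{\E}^{-1})$ times the measure of $1+\mathcal{P}_{\E}$ under $dh$.

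The main obstacle is the measure normalization. With $dh=dx/|x|$ and $\int_{\mathcal{O}_{\E}}dx=1$, the set $1+\mathcal{P}_{\E}$ has $dh$-volume $q_{\E}^{-1}$, not $1$. I would therefore recheck the intended normalization of $dh$, or of $\mathcal{W}$ (for instance whether $\mathcal{W}(1)$ carries a scaling by $q_{\E}$, as in \cite{Adrian-Liu}), and adjust the constant so that it matches the stated value $\psi_{\E}(-c_{\lambda}\varpi_{\E}^{-1})$. As written, the computation gives this value multiplied by $\mathrm{vol}(1+\mathcal{P}_{\E})$.
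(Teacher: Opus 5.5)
Your argument is the same as the paper's. You commute $\alpha$ past $\mathrm{diag}(h,1)$ to pull out $\psi_{\E}(-hc_{\lambda}\varpi_{\E}^{-1})$. You then use Theorem \ref{Support} to restrict to $h\in 1+\mathcal{P}_{\E}$, where $\mathcal{W}_{(v,\,\varphi,\,\zeta)}(\mathrm{diag}(h,1))=1$, and observe that the integrand collapses to the constant $\psi_{\E}(-c_{\lambda}\varpi_{\E}^{-1})$.

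Your normalization concern is correct, and it points to a slip in the paper rather than in your proof. The paper obtains the stated value by substituting $h=1+\varpi_{\E}x$ and replacing $\int_{1+\mathcal{P}_{\E}}\cdots\,dh$ with $\int_{\mathcal{O}_{\E}}\cdots\,dx$, which omits the Jacobian $|\varpi_{\E}|=q_{\E}^{-1}$. Since $\mathcal{W}_{(v,\,\varphi,\,\zeta)}(1)=1$ by definition, there is no rescaling to find. Under the stated normalization of $dh$ the integral is $q_{\E}^{-1}\psi_{\E}(-c_{\lambda}\varpi_{\E}^{-1})$. This is harmless for the paper's purposes only because the same omission occurs in the $h'$-integral of Proposition \ref{PropA2}, so the factor cancels in Corollary \ref{TD1}.
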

\begin{proof}
From Theorem \ref{Support}, we have 
\begin{align*}
\Psi\left(s; \, \rho(\alpha)\mathcal{W}_{\left(v, \, \varphi, \, \zeta\right)}, \, \lambda \right) &= 
   \int_{1+\mathcal{P}_{\E}}\mathcal{W}_{\left(v, \, \varphi, \, \zeta\right)}\left(\begin{pmatrix}
    h &  \\
     & 1
\end{pmatrix}\begin{pmatrix}
    1 & -c_{\lambda}\varpi_{\E}^{-1} \\
     & 1
\end{pmatrix}\right)\lambda(h) \, dh \\
& = \int_{1+\mathcal{P}_{\E}}\mathcal{W}_{\left(v, \, \varphi, \, \zeta\right)}\left(\begin{pmatrix}
    1 & -hc_{\lambda}\varpi_{\E}^{-1} \\
     & 1
\end{pmatrix}\begin{pmatrix}
    h &  \\
     & 1
\end{pmatrix}\right)\lambda(h) \, dh \\
&= \int_{1+\mathcal{P}_{\E}}\psi_{\E}\left(-hc_{\lambda}\varpi_{\E}^{-1}\right)\lambda(h) \, dh \\ 
&= \psi_{\E}\left(-c_{\lambda}\varpi_{\E}^{-1}\right)\int_{\mathcal{O}_{\E}}\psi_{\E}\left(-c_{\lambda}x\right)\lambda(1 + \varpi_{\E}x) \, dx \\
&= \psi_{\E}\left(-c_{\lambda}\varpi_{\E}^{-1}\right)
\end{align*}
as we can rewrite $h = 1 + \varpi_{\E}x$ where $x \in \mathcal{O}_{\E}$.
\end{proof}

Next, we compute $\widetilde{\Psi}\left(1-s; \, \rho(w_{2, 1})\widetilde{\rho(\alpha)\mathcal{W}_{\left(v, \, \varphi, \, \zeta\right)}}, \, \lambda^{-1}\right)$.
\begin{proposition}\label{PropA2}
Let $\pi_{(v,  \, \varphi,  \, \zeta)}$ be a simple supercuspidal representation of $\GL(2, \E)$ and $\lambda$ a depth-one quasi-character of $\E^{\times}$.~Then
\begin{align*}
    & \widetilde{\Psi}\left(1-s; \, \rho(w_{2, 1})\widetilde{\rho(\alpha)\mathcal{W}_{\left(v, \, \varphi, \, \zeta\right)}}, \, \lambda^{-1}\right) \\
    &= \omega_{(v,  \, \varphi,  \, \zeta)}\left(c_{\lambda}^{-1}\varpi_{\E}\right)\lambda\left(-c_{\lambda}^{-2}\varpi_{\E}^{2}\left(1 - (c_{\lambda}^{2}v)^{-1}\varpi_{\E}\right)\right)\psi_{\E}\left(c_{\lambda}\varpi_{\E}^{-1}\right)q_{\E}^{1-2s}.
\end{align*}
\end{proposition}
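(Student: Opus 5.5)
The plan is to compute the left-hand side directly from the definitions. For $n=2$ the matrix $w_{2,1}$ is the identity and there is no $x$-integration, so
\[
\widetilde{\Psi}\left(1-s; \, \rho(w_{2, 1})\widetilde{\rho(\alpha)\mathcal{W}_{(v,\,\varphi,\,\zeta)}}, \, \lambda^{-1}\right) = \int_{\E^{\times}} \mathcal{W}_{(v,\,\varphi,\,\zeta)}\left(w_{2}\begin{pmatrix} h^{-1} & \\ & 1\end{pmatrix}\alpha\right)\lambda^{-1}(h)\,|h|^{\sfrac{1}{2}-s}\,dh .
\]
Here I have used that ${}^{t}\mathrm{diag}(h,1)^{-1}=\mathrm{diag}(h^{-1},1)$. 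The argument of $\mathcal{W}$ is
\[
g_{h}:=\begin{pmatrix} 0 & 1 \\ h^{-1} & -h^{-1}c_{\lambda}\varpi_{\E}^{-1}\end{pmatrix}.
\]
This matrix does not lie in $\text{P}(2,\E)$, so Theorem \ref{Support} cannot be applied directly. Instead I will use the explicit description of $\mathcal{W}$, which is supported on $\N(2,\E)\,\textbf{J}_{v}$ with value $\psi_{2}(u)\Lambda_{(v,\,\varphi,\,\zeta)}(j)$.

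\textbf{Step 1: decompose $g_{h}$.} I want to decide for which $h$ we can write $g_{h}=u\,j$ with $u=\begin{pmatrix}1&t\\&1\end{pmatrix}$ and $j\in\textbf{J}_{v}=\langle\beta_{v}\rangle\,\E^{\times}\,U^{1}(\mathfrak{I}_{2})\mathcal{O}_{\E}^{\times}$. I will compute $u^{-1}g_{h}$, multiply on the left by a suitable power $\beta_{v}^{-k}$ and a central scalar, and require the result to lie in $\mathcal{O}_{\E}^{\times}U^{1}(\mathfrak{I}_{2})$. Since $\beta_{v}$ swaps the diagonal valuations, the lower-left entry $h^{-1}$ must have the right valuation relative to the bottom-right entry $-h^{-1}c_{\lambda}\varpi_{\E}^{-1}$.

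I expect this to force $\mathrm{val}(h)$ to take a single value (judging from $q_{\E}^{1-2s}=|h|^{1/2-s}$, this should be $\mathrm{val}(h)=-2$). It should also determine $t$ and the leading terms of $h$ in terms of $c_{\lambda}$ and $v$. Concretely, the support should be a coset of the shape
\[
h\in h_{0}\,(1+\mathcal{P}_{\E}) \quad\text{or} \quad h\in h_{0}\,(1+\mathcal{P}_{\E}^{2}),
\]
with $h_{0}^{-1}=-c_{\lambda}^{-2}\varpi_{\E}^{2}\bigl(1-(c_{\lambda}^{2}v)^{-1}\varpi_{\E}\bigr)$ up to the precision that $\lambda$ detects. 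The value of $\mathcal{W}$ at $g_{h}$ then splits into three pieces:
\begin{enumerate}
\item a $\psi_{2}(u)=\psi_{\E}(t)$ term, which should produce $\psi_{\E}(c_{\lambda}\varpi_{\E}^{-1})$;
\item a central-character factor $\omega_{(v,\,\varphi,\,\zeta)}(c_{\lambda}^{-1}\varpi_{\E})$ coming from the scalar pulled out;
\item a $\psi_{\beta_{v}}$-value on the $U^{1}(\mathfrak{I}_{2})$ part.
\end{enumerate}
The $\zeta$-contributions from the power of $\beta_{v}$ should cancel against $\Lambda(\varpi_{\E})=(\zeta^{2}\varphi(v))^{-1}$, or be absorbed into $\omega$.

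\textbf{Step 2: integrate over the support coset.} I will write $h=h_{0}(1+\varpi_{\E}x)$ with $x\in\mathcal{O}_{\E}$. The remaining $x$-dependence then consists of two characters of the same type:
\begin{enumerate}
\item $\lambda^{-1}(1+\varpi_{\E}x)=\psi_{\E}(-c_{\lambda}x)$, by \eqref{Depth-One};
\item the $\psi_{\E}$-phase coming from $\psi_{2}(u)\psi_{\beta_{v}}(\cdot)$.
\end{enumerate}
The choice of $\alpha$ (the entry $-c_{\lambda}\varpi_{\E}^{-1}$) is designed so that these two characters cancel, exactly as in Proposition \ref{PropA1}. The integrand should therefore be constant on the coset. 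Once that holds, the volume of the coset combines with $|h_{0}|^{1/2-s}$ and $\lambda^{-1}(h_{0})=\lambda(h_{0}^{-1})$ to give the stated product, with the factor $q_{\E}^{1-2s}$.

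\textbf{Main obstacle.} I expect the hardest part to be Step 1. One must pin down the support exactly, including the second-order correction $1-(c_{\lambda}^{2}v)^{-1}\varpi_{\E}$ in $h$, since $\lambda$ is sensitive modulo $1+\mathcal{P}_{\E}^{2}$. One must also track all the constants: the power of $\beta_{v}$, the scalar, and the $\mathcal{O}_{\E}^{\times}$-component evaluated by $\varphi$. These constants are what assemble into $\omega(c_{\lambda}^{-1}\varpi_{\E})$, the sign in $\lambda(-\cdots)$, and the phase $\psi_{\E}(c_{\lambda}\varpi_{\E}^{-1})$.

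My first attempt will be to solve $u^{-1}g_{h}=z\,\beta_{v}^{k}\,m$ entrywise with $z\in\E^{\times}$ and $m\in\mathcal{O}_{\E}^{\times}U^{1}(\mathfrak{I}_{2})$, trying $k=1$ first. I will reduce the resulting conditions modulo successive powers of $\mathcal{P}_{\E}$. At each stage I will check that the measure normalisation $\int_{\mathcal{O}_{\E}}dx=1$ gives the exact power of $q_{\E}$ claimed.
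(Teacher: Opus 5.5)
Your route is the paper's route. You write the integrand as $\mathcal{W}_{(v,\,\varphi,\,\zeta)}(M(h))$ with $M(h)=g_h$, exclude odd powers of $\beta_v$ by comparing valuations in the bottom row (which left multiplication by $\N(2,\E)$ does not change), and find the support is one coset of $1+\mathcal{P}_{\E}$ with $\mathrm{val}_{\E}(h)=-2$. On that coset $\lambda^{-1}$ cancels the varying $\psi_{\E}$-phase.

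Two of your Step 1 expectations need correcting. First, your trial $k=1$ fails for exactly that valuation reason. The case that occurs is $k=0$ with $z$ of valuation one, i.e. the paper's $\beta_v^{-2}(vc_\lambda)^{-1}$. Concretely, for every $h$,
\[
M(h)=\begin{pmatrix}1 & c_{\lambda}\varpi_{\E}^{-1}\\ & 1\end{pmatrix}\left(-h^{-1}c_{\lambda}\varpi_{\E}^{-1}\right)\begin{pmatrix}1 & b\\ -c_{\lambda}^{-1}\varpi_{\E} & 1\end{pmatrix},\qquad b=-hc_{\lambda}^{-1}\varpi_{\E}-c_{\lambda}\varpi_{\E}^{-1}.
\]
The support is exactly $h\in -c_{\lambda}^{2}\varpi_{\E}^{-2}(1+\mathcal{P}_{\E})$ (Lemma \ref{LemmaA}), which is precisely where $b\in\mathcal{O}_{\E}$. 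Second, there is no second-order refinement of the support, so the $1+\mathcal{P}_{\E}^{2}$ option does not arise. Write $h=-c_{\lambda}^{2}\varpi_{\E}^{-2}(1+\varpi_{\E}y)$; then $b=c_{\lambda}y$, and the $\psi_{\beta_v}$-value of the last factor is $\psi_{\E}(c_{\lambda}y)\,\psi_{\E}(-(c_{\lambda}v)^{-1})$. The first piece cancels $\lambda^{-1}(1+\varpi_{\E}y)$. The constant $\psi_{\E}(-(c_{\lambda}v)^{-1})$ is what \eqref{Depth-One} rewrites as $\lambda(1-(c_{\lambda}^{2}v)^{-1}\varpi_{\E})$. (The paper's displayed factorization \eqref{Factorization} has a sign slip: its last factor needs $(2,2)$-entry $-h^{-1}c_{\lambda}^{2}\varpi_{\E}^{-2}$ to lie in $U^{1}(\mathfrak{I}_{2})$. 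This does not affect the paper's subsequent computation.)

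The step that will not go through as you state it is the final power of $q_{\E}$. With $\int_{\mathcal{O}_{\E}}dx=1$ and $dh=dx/|x|$, the support coset has $dh$-volume $q_{\E}^{-1}$. So the constant value $\omega_{(v,\,\varphi,\,\zeta)}(c_{\lambda}^{-1}\varpi_{\E})\,\lambda\bigl(-c_{\lambda}^{-2}\varpi_{\E}^{2}(1-(c_{\lambda}^{2}v)^{-1}\varpi_{\E})\bigr)\,\psi_{\E}(c_{\lambda}\varpi_{\E}^{-1})\,q_{\E}^{1-2s}$ integrates to that value times $q_{\E}^{-1}$, i.e. with $q_{\E}^{-2s}$ in place of $q_{\E}^{1-2s}$. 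The paper gets $q_{\E}^{1-2s}$ only because it substitutes $h'=1+\varpi_{\E}x$ and replaces $dh'$ on $1+\mathcal{P}_{\E}$ by $dx$ on $\mathcal{O}_{\E}$, dropping the Jacobian $|\varpi_{\E}|=q_{\E}^{-1}$. It does the same in Proposition \ref{PropA1}, whose true value is $\psi_{\E}(-c_{\lambda}\varpi_{\E}^{-1})\,q_{\E}^{-1}$. The ratio in Corollary \ref{TD1} is therefore correct, but Proposition \ref{PropA2} as stated is off by a factor $q_{\E}^{-1}$ under the paper's own normalisation. Carried out carefully, your argument proves the corrected identity, and your planned check on the power of $q_{\E}$ is exactly what will expose the discrepancy.
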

\begin{proof}
From Subsection \ref{TwistedLocalFactors}, we have by definition that 
\begin{align*}
    & \widetilde{\Psi}\left(1-s; \, \rho(w_{2, 1})\widetilde{\rho(\alpha)\mathcal{W}_{\left(v, \, \varphi, \, \zeta\right)}}, \, \lambda^{-1}\right) \\
    & = \int_{\E^{\times}}\mathcal{W}_{\left(v, \, \varphi, \, \zeta\right)} \left(\begin{pmatrix}
        & 1 \\
        h^{-1} & -h^{-1}c_{\lambda}\varpi_{\E}^{-1}
    \end{pmatrix}\right)\lambda(h)^{-1}|h|^{\sfrac{1}{2}-s}\, dh.
\end{align*}
We first prove the following lemma that tells us when matrices of the form 
\[
\begin{pmatrix}
        & 1 \\
        h^{-1} & -h^{-1}c_{\lambda}\varpi_{\E}^{-1}
    \end{pmatrix}
\]
belong to $\Su\left(\mathcal{W}_{\left(v, \, \varphi, \, \zeta\right)}\right)$, which is contained in the disjoint union of double cosets of the form 
\[
\N(2, \E)  \beta_{v}^{k}  \mathcal{O}_{\E}^{\times}  U^{1}(\mathfrak{I}_{2})
\]
with $k \in \mathbb{Z}$.
    
\begin{lemma}\label{LemmaA}
    Let $h \in \E^{\times}$, then 
    \[
    \begin{pmatrix}
        & 1 \\
        h^{-1} & -h^{-1}c_{\lambda}\varpi_{\E}^{-1}
    \end{pmatrix} \in \left\{ \, \N(2, \E)  \beta_{v}^{k}  \mathcal{O}_{\E}^{\times}  U^{1}(\mathfrak{I}_{2}) \, : \, k \in \mathbb{Z} \, \right\}
    \]
    if and only if $k = -2$.
\end{lemma}
\begin{proof}
     Let $M(h) := \begin{pmatrix}
        & 1 \\
        h^{-1} & -h^{-1}c_{\lambda}\varpi_{\E}^{-1}
    \end{pmatrix}$, then $\text{val}_{\E}\left(M(h)_{(2, \, 1)}\right) > \text{val}_{\E}\left(M(h)_{(2, \, 2)}\right)$.~This implies that $k$ cannot be odd, as this would imply that $\text{val}_{\E}\left(M(h)_{(2, \, 1)}\right) \leq  \text{val}_{\E}\left(M(h)_{(2, \, 2)}\right)$. Consequently, the power of $\beta_{v}$ must be even.

    Suppose $M(h) \in \Su\left(\mathcal{W}_{\left(v, \, \varphi, \, \zeta\right)}\right)$.~Then $M(h) = u\beta_{v}^{k}z$ where $u \in \N(2, \E)$, $k$ is even, and $z \in \mathcal{O}_{\E}^{\times}U^{1}\left(\mathfrak{I}_{2}\right)$.~Comparing the $(1, 1)$-entries of $M(h)$ and $u\beta_{v}^{k}z$, we see that $\text{val}_{\E}\left(u_{1, 2}\right) = -1$.~Rewriting $k = 2j$, we have from comparing the $(1, 2)$-entries of $M(h)$ and $u\beta_{v}^{k}z$ that $\text{val}_{\E}\left(u_{1, 2}\right) = j$ if $j < 0$ and $\text{val}_{\E}\left(u_{1, 2}\right) \geq 0$ if $j \geq 0$.~Hence, $j = -1$.

    The preceding paragraph implies that $M(h) \in \N(2, \E)  \beta_{v}^{-2}  \mathcal{O}_{\E}^{\times}  U^{1}(\mathfrak{I}_{2})$. Let
    \begin{align}\label{Decomposition}
        M(h) &= u\beta_{v}^{-2}z \\ \notag
        &= \begin{pmatrix}
            1 & u_{1, 2} \\
            & 1
        \end{pmatrix}(av\varpi_{\E})\begin{pmatrix}
            1 + z_{1, 1}\varpi_{\E} & z_{1, 2} \\
            z_{2, 1}\varpi_{\E} & 1 + z_{2, 2}\varpi_{\E}
        \end{pmatrix}
    \end{align}
    where $a \in \mathcal{O}_{\E}^{\times}$, $u_{1, 2} \in \E$, and $z_{i, j} \in \mathcal{O}_{\E}$.~From (\ref{Decomposition}), $z_{2, 1} = -c_{\lambda}^{-1}(1 + z_{2, 2}\varpi_{\E})$.~Comparing the $(1, 1)$-entries of $M(h)$ and $u\beta_{v}^{-2}z$, we solve for $u_{1, 2}$ and see that $u_{1, 2} \in c_{\lambda}\varpi_{\E}^{-1}\left(1 + \mathcal{P}_{\E}\right)$.~Using this, we compare the $(1, 2)$-entries of $M(h)$ and $u\beta_{v}^{-2}z$ to see that $a \equiv (vc_{\lambda})^{-1} \pmod{\mathcal{P}_{\E}}$.

    Our computations imply that $h \in -c_{\lambda}^{2}\varpi_{\E}^{-2}(1 + \mathcal{P}_{\E})$ and we may factorize $M(h)$ as:
    \begin{equation}\label{Factorization}
        M(h) = \begin{pmatrix}
            1 & -hc_{\lambda}^{-1}\varpi_{\E} \\
            & 1
        \end{pmatrix}\beta_{v}^{-2}(vc_{\lambda})^{-1}\begin{pmatrix}
            1 & \\
            h^{-1}c_{\lambda}\varpi_{\E}^{-1} & h^{-1}c_{\lambda}^{2}\varpi_{\E}^{-2}
        \end{pmatrix}. \qedhere
    \end{equation} 
\end{proof}

Lemma \ref{LemmaA} tells us that $h \in -c_{\lambda}^{2}\varpi_{\E}^{-2}(1 + \mathcal{P}_{\E})$.~Performing a change of variables, we let $h' = -c_{\lambda}^{-2}\varpi_{\E}^{2}h$.~Using the decomposition at the end of Lemma \ref{LemmaA}, we have that 
\begin{align*}
    & \widetilde{\Psi}\left(1-s; \, \rho(w_{2, 1})\widetilde{\rho(\alpha)\mathcal{W}_{\left(v, \, \varphi, \, \zeta\right)}}, \, \lambda^{-1}\right) \\
    &= \zeta^{-2}\varphi\left((c_{\lambda}v)^{-1}\right)\lambda\left(-c_{\lambda}^{-2}\varpi_{\E}^{2}\right)q_{\E}^{1-2s}\int_{1 + \mathcal{P}_{\E}}\psi_{\E}\left(h'c_{\lambda}\varpi_{\E}^{-1} - (h'c_{\lambda}v)^{-1}\right)\lambda(h')^{-1} \, dh' \\
    &= \omega_{(v,  \, \varphi,  \, \zeta)}\left(c_{\lambda}^{-1}\varpi_{\E}\right)\lambda\left(-c_{\lambda}^{-2}\varpi_{\E}^{2}\right)\psi_{\E}\left(-(c_{\lambda}v)^{-1}\right)\psi_{\E}\left(c_{\lambda}\varpi_{\E}^{-1}\right)q_{\E}^{1-2s} \\ & \hspace{3in} \cdot \int_{\mathcal{O}_{\E}}\psi_{\E}\left(c_{\lambda}x\right)\lambda(1 + \varpi_{\E}x)^{-1} \, dx \\
    &= \omega_{(v,  \, \varphi,  \, \zeta)}\left(c_{\lambda}^{-1}\varpi_{\E}\right)\lambda\left(-c_{\lambda}^{-2}\varpi_{\E}^{2}\left(1 - (c_{\lambda}^{2}v)^{-1}\varpi_{\E}\right)\right)\psi_{\E}\left(c_{\lambda}\varpi_{\E}^{-1}\right)q_{\E}^{1-2s}
\end{align*}
as $\psi_{\E}\left(-(c_{\lambda}v)^{-1}\right) = \lambda\left(1 - (c_{\lambda}^{2}v)^{-1}\varpi_{\E}\right)$ and $h' = 1 + \varpi_{\E}x$ for $x \in \mathcal{O}_{\E}$.
\end{proof}

Putting our computations together, we have the following formula:
\begin{corollary}\label{TD1}
Let $\pi_{(v,  \, \varphi,  \, \zeta)}$ be a simple supercuspidal representation of $\GL(2, \E)$ and $\lambda$ a depth-one quasi-character of $\E^{\times}$.~Then
\[
\gamma\left(s, \pi_{(v,  \, \varphi,  \, \zeta)} \times \lambda, \psi_{\E}\right) = \omega_{(v,  \, \varphi,  \, \zeta)}\left(c_{\lambda}^{-1}\varpi_{\E}\right)\lambda\left(c_{\lambda}^{-2}\varpi_{\E}^{2}\left(1 - (c_{\lambda}^{2}v)^{-1}\varpi_{\E}\right)\right)\psi_{\E}\left(2c_{\lambda}\varpi_{\E}^{-1}\right)q_{\E}^{1-2s}.
\]
\end{corollary}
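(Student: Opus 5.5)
The plan is to read the gamma factor off the functional equation of Theorem \ref{GammaFactor2.7}, using the test vector $W_{\pi} = \rho(\alpha)\mathcal{W}_{(v,\,\varphi,\,\zeta)}$ with $n = 2$. In that case $\chi(-1)^{n-1} = \lambda(-1)$, so the functional equation reads
\[
\widetilde{\Psi}\left(1-s; \, \rho(w_{2,1})\widetilde{\rho(\alpha)\mathcal{W}_{(v,\,\varphi,\,\zeta)}}, \, \lambda^{-1}\right) = \lambda(-1)\,\gamma\left(s, \pi_{(v,\,\varphi,\,\zeta)} \times \lambda, \psi_{\E}\right)\Psi\left(s; \, \rho(\alpha)\mathcal{W}_{(v,\,\varphi,\,\zeta)}, \, \lambda\right).
\]
Proposition \ref{PropA1} gives the right-hand integral as $\psi_{\E}(-c_{\lambda}\varpi_{\E}^{-1})$. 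This is a nonzero constant, so we may divide by it. Proposition \ref{PropA2} gives the left-hand side explicitly.

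Solving for $\gamma$, I would divide the expression of Proposition \ref{PropA2} by $\lambda(-1)\psi_{\E}(-c_{\lambda}\varpi_{\E}^{-1})$. The additive characters combine as $\psi_{\E}(c_{\lambda}\varpi_{\E}^{-1})/\psi_{\E}(-c_{\lambda}\varpi_{\E}^{-1}) = \psi_{\E}(2c_{\lambda}\varpi_{\E}^{-1})$. For the multiplicative part, $\lambda(-1)^{-1}\lambda\bigl(-c_{\lambda}^{-2}\varpi_{\E}^{2}(1 - (c_{\lambda}^{2}v)^{-1}\varpi_{\E})\bigr)$ becomes $\lambda\bigl(c_{\lambda}^{-2}\varpi_{\E}^{2}(1 - (c_{\lambda}^{2}v)^{-1}\varpi_{\E})\bigr)$, because $\lambda(-1)^{\pm1}$ cancels the sign ($\lambda(-1)^2=1$). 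The factors $\omega_{(v,\,\varphi,\,\zeta)}(c_{\lambda}^{-1}\varpi_{\E})$ and $q_{\E}^{1-2s}$ carry through unchanged. This yields exactly the displayed formula.

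The main obstacle is bookkeeping rather than conceptual. First, I need the normalization in Theorem \ref{GammaFactor2.7}: the sign $\chi(-1)^{n-1}$ with $n=2$, and the convention that the $\Mat(n-2\times 1)$ integral in $\widetilde{\Psi}$ is empty for $n=2$, so that $\widetilde{\Psi}$ is the one-variable integral used in Proposition \ref{PropA2}. Second, I need that $\rho(\alpha)\mathcal{W}_{(v,\,\varphi,\,\zeta)}$ genuinely lies in the Whittaker model. This holds because the model is stable under right translation, so the functional equation applies to it. I would double-check the sign cancellation $\lambda(-1)^{-1}\lambda(-x)=\lambda(x)$ carefully, since that is the only place a stray sign could enter.
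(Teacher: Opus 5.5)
Your proposal is correct and follows the paper's proof: apply the $n=2$ functional equation of Theorem \ref{GammaFactor2.7} to $\rho(\alpha)\mathcal{W}_{(v,\,\varphi,\,\zeta)}$, then divide the expression in Proposition \ref{PropA2} by $\lambda(-1)\,\psi_{\E}(-c_{\lambda}\varpi_{\E}^{-1})$. The paper absorbs the factor $\lambda(-1)$ without comment, by dropping the minus sign inside $\lambda(\cdot)$; this is the same cancellation you write out explicitly.
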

\begin{proof}
Using Theorem \ref{GammaFactor2.7} and Propositions \ref{PropA1} and \ref{PropA2}, we have
    \begin{align*}
    & \gamma\left(s, \pi_{(v,  \, \varphi,  \, \zeta)} \times \lambda, \psi_{\E}\right) \\
    &= \frac{\omega_{(v,  \, \varphi,  \, \zeta)}\left(c_{\lambda}^{-1}\varpi_{\E}\right)\lambda\left(c_{\lambda}^{-2}\varpi_{\E}^{2}\left(1 - (c_{\lambda}^{2}v)^{-1}\varpi_{\E}\right)\right)\psi_{\E}\left(c_{\lambda}\varpi_{\E}^{-1}\right)q_{\E}^{1-2s}}{\psi_{\E}\left(-c_{\lambda}\varpi_{\E}^{-1}\right)} \\
    &= \omega_{(v,  \, \varphi,  \, \zeta)}\left(c_{\lambda}^{-1}\varpi_{\E}\right)\lambda\left(c_{\lambda}^{-2}\varpi_{\E}^{2}\left(1 - (c_{\lambda}^{2}v)^{-1}\varpi_{\E}\right)\right)\psi_{\E}\left(2c_{\lambda}\varpi_{\E}^{-1}\right)q_{\E}^{1-2s}.~\qedhere
\end{align*}
\end{proof}

If $\pi_{(v,\,\varphi,\,\zeta)}$ is distinguished by $\GL(2,\F)$, then the twisted gamma factors between $\pi_{(v,\,\varphi,\,\zeta)}$ and every tamely ramified quasi-character or depth-one quasi-character of $\E^{\times}$ that is unitary and trivial on $\F^{\times}$ are equal to 1.~From Lemma \ref{Lemma1}, we have that $\overline{v} \in k_{\F}^{\times}$, $\varphi$ is trivial on $k_{\F}^{\times}$, and $\zeta = 1$.

For depth-one quasi-characters, we have that for $x \in \mathcal{O}_{\E}$,
\begin{equation}\label{Level-OneE}
    \lambda(1+\varpi_{\F}x) = \psi_{\E}\left(c_{\lambda}x\right)
\end{equation}
where $c_{\lambda} \in \mathcal{O}_{\E}^{\times}$.~This implies that for $x \in \mathcal{O}_{\F}$, the right hand side of (\ref{Level-OneE}) is equal to 1.~Exploiting this observation, we have the following lemma:
\begin{lemma}\label{LemmaA2}
The reduction $\overline{c_{\lambda}}$ lies in $k_{\F}^{\times}$.
\end{lemma}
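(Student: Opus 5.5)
The plan is to exploit (\ref{Level-OneE}) together with the hypothesis that $\lambda$ is trivial on $\F^{\times}$. In particular $\lambda$ is trivial on $1+\mathcal{P}_{\F} \subset \F^{\times}$. Since $\E/\F$ is unramified we may take $\varpi_{\E}=\varpi_{\F}$, so for every $x \in \mathcal{O}_{\F}$ we get $1+\varpi_{\F}x \in 1+\mathcal{P}_{\F}$. Hence, as observed just before the lemma,
\[
\psi_{\E}(c_{\lambda}x) = 1 \quad \text{for all } x \in \mathcal{O}_{\F}.
\]

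Next we pass to the residue field. Because $\psi_{\E}$ has conductor $\mathcal{P}_{\E}$, its restriction to $\mathcal{O}_{\E}$ factors through a non-trivial additive character $\overline{\psi}$ of $k_{\E}$. Since $\psi_{\E}$ is trivial on $\F$, $\overline{\psi}$ is trivial on $k_{\F}$. Thus $\overline{\psi}$ is a non-trivial character of $k_{\E}/k_{\F}$, which is one-dimensional over $k_{\F}$. The key claim is that every such $\overline{\psi}$ has the form $y \mapsto \psi_0(\mathrm{Tr}_{k_{\E}/k_{\F}}(\delta y))$, where $\psi_0$ is a fixed non-trivial character of $k_{\F}$ and $\delta \in k_{\E}^{\times}$ satisfies $\mathrm{Tr}(\delta)=0$, i.e.\ $\sigma(\delta)=-\delta$. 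This holds because the characters of $k_{\E}$ are exactly $y\mapsto \psi_0(\mathrm{Tr}(ay))$, and triviality on $k_{\F}$ forces $\mathrm{Tr}(a)=0$.

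The displayed condition now reads $\psi_0(\mathrm{Tr}(\delta\overline{c_{\lambda}}\,t))=\psi_0(t\,\mathrm{Tr}(\delta \overline{c_{\lambda}}))=1$ for all $t\in k_{\F}$. Since $\psi_0$ is non-trivial, this gives $\mathrm{Tr}_{k_{\E}/k_{\F}}(\delta\overline{c_{\lambda}})=0$. The kernel of the trace is the one-dimensional $k_{\F}$-line $\delta k_{\F}$, because the trace is surjective and $[k_{\E}:k_{\F}]=2$. Therefore $\delta\overline{c_{\lambda}} \in \delta k_{\F}$, and so $\overline{c_{\lambda}}\in k_{\F}$. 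It is nonzero because $c_{\lambda}\in\mathcal{O}_{\E}^{\times}$.

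The main obstacle is the identification of $\overline{\psi}$ with a trace form with an anti-invariant $\delta$; everything else is formal. An alternative that avoids choosing $\psi_0$ argues directly: the set $A=\{a\in k_{\E} : \overline{\psi}(a\,k_{\F})=1\}$ is a $k_{\F}$-subspace containing $k_{\F}$. It is not all of $k_{\E}$, since otherwise $\overline{\psi}(k_{\E})=\overline{\psi}(\cdot\,k_{\F})$ would be trivial, contradicting $\overline{\psi}\neq 1$ (take $a$ with $\overline{\psi}(a)\ne1$ and $t=1$). Hence $A=k_{\F}$, which is exactly the statement of the lemma.
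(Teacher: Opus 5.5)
Your proof is correct and is essentially the paper's argument. The paper writes $c_{\lambda}=c_0+c_1\sigma_f$ in the $\mathcal{O}_{\F}$-basis $\{1,\sigma_f\}$ of $\mathcal{O}_{\E}$, uses $\psi_{\E}(c_0x)=1$ to get $\psi_{\E}(c_1\sigma_f x)=1$ for $x\in\mathcal{O}_{\F}$, and then concludes $c_1\in\mathcal{P}_{\F}$, since otherwise $\psi_{\E}$ would be trivial on $\mathcal{O}_{\F}+\mathcal{O}_{\F}\sigma_f=\mathcal{O}_{\E}$. This is your subspace argument $A=k_{\F}$ written in coordinates, while your trace-form description of $\overline{\psi}$ via an anti-invariant $\delta$ is a valid but unnecessary repackaging of the same dimension count.
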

\begin{proof}
    Let $\sigma_{f}$ be the root of some monic degree two polynomial $f \in  \mathcal{O}_{\F}[X]$ such that the reduction of the coefficients of $f$ modulo $\mathcal{P}_{\F}$ is irreducible over $k_{\F}$.~Then $\E = \F\left[\sigma_{f}\right]$ and $c_{\lambda} = c_{0} + c_{1}\sigma_{f}$ with $c_{i} \in \mathcal{O}_{\F}$.~For $x \in \mathcal{O}_{\F}$, we have
\[
\psi_{\E}(c_{0}x + c_{1}x\sigma_{f}) = \psi_{\E}(c_{1}x\sigma_{f}) = 1.
\]
Hence, $c_{1} \in \mathcal{P}_{\F}$; otherwise, $\psi_{\E}$ would be trivial on $\mathcal{O}_{\E}$.
\end{proof}

Lemma \ref{LemmaA2} enables us to prove the following:
\begin{proposition}
    Let $\overline{v} \in k_{\F}^{\times}$, $\varphi$ be trivial on $k_{\F}^{\times}$, and $\zeta = 1$.~Then 
    \[
    \gamma\left(\sfrac{1}{2}, \, \pi_{(v,  \, \varphi,  \, \zeta)} \times \lambda, \, \psi_{\E}\right) = 1
    \]
    for all unitary, depth-one quasi-characters $\lambda$ of $\E^{\times}$ that are trivial on $\F^{\times}$.
\end{proposition}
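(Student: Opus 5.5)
The plan is to evaluate the explicit formula of Corollary \ref{TD1} at $s=\sfrac{1}{2}$ and show that each factor equals $1$ under the hypotheses. The depth-one phenomenon only arises for $\GL(2,\E)$, so we work with $n=2$. At $s=\sfrac12$ the factor $q_{\E}^{1-2s}$ equals $1$, and we are left with
\[
\omega_{(v,\,\varphi,\,\zeta)}\!\left(c_{\lambda}^{-1}\varpi_{\E}\right)\,\lambda\!\left(c_{\lambda}^{-2}\varpi_{\E}^{2}\right)\,\lambda\!\left(1-(c_{\lambda}^{2}v)^{-1}\varpi_{\E}\right)\,\psi_{\E}\!\left(2c_{\lambda}\varpi_{\E}^{-1}\right).
\]

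\textbf{Setup.} By Proposition \ref{Bijection} we may assume $v\in\mathcal{O}_{\F}^{\times}$, and we take $\varpi_{\E}=\varpi_{\F}$.

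The central character is trivial on $\F^{\times}$:
\begin{itemize}
\item on $\mathcal{O}_{\F}^{\times}$ it equals $\varphi$, which is trivial on $k_{\F}^{\times}$;
\item at the uniformizer, $\omega(\varpi_{\E})=(\zeta^{2}\varphi(v))^{-1}=1$, because $\zeta=1$ and $\overline v\in k_{\F}^{\times}$.
\end{itemize}

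By Lemma \ref{LemmaA2} we can write $c_{\lambda}=c_{0}u$ with $c_{0}\in\mathcal{O}_{\F}^{\times}$ and $u=1+\varpi_{\E}y\in 1+\mathcal{P}_{\E}$. Here $c_{\lambda}$ is only determined modulo $\mathcal{P}_{\E}$ by \eqref{Depth-One}, and $\psi_{\E}$ has conductor $\mathcal{P}_{\E}$. So one should check that the final expression is insensitive to this choice, or simply fix the representative $c_\lambda = c_0 u$ throughout.

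\textbf{Factor by factor.}
\begin{itemize}
\item \emph{Central character factor.} Since $\omega$ is trivial on $\F^{\times}$, we get $\omega(c_{\lambda}^{-1}\varpi_{\E})=\omega(u^{-1})=\varphi(\overline{u})^{-1}=1$.
\item \emph{First $\lambda$-factor.} Since $\lambda$ is trivial on $\F^{\times}$, we get $\lambda(c_{\lambda}^{-2}\varpi_{\E}^{2})=\lambda(u)^{-2}$. By \eqref{Depth-One} this equals $\psi_{\E}(-2c_{\lambda}y)=\psi_{\E}(-2c_{0}y)$, the last step because $c_{\lambda}y\equiv c_{0}y \pmod{\mathcal{P}_{\E}}$.
\item \emph{Additive factor.} We have $\psi_{\E}(2c_{\lambda}\varpi_{\E}^{-1})=\psi_{\E}(2c_{0}\varpi_{\F}^{-1})\,\psi_{\E}(2c_{0}y)=\psi_{\E}(2c_{0}y)$, since $2c_0\varpi_{\F}^{-1}\in\F$ and $\psi_{\E}$ is trivial on $\F$. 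This cancels exactly against the previous factor.
\item \emph{Second $\lambda$-factor.} By \eqref{Depth-One}, $\lambda(1-(c_{\lambda}^{2}v)^{-1}\varpi_{\E})=\psi_{\E}(-(c_{\lambda}v)^{-1})$. Now $(c_{\lambda}v)^{-1}\in c_{0}^{-1}v^{-1}(1+\mathcal{P}_{\E})\subset \F+\mathcal{P}_{\E}$. Since $\psi_{\E}$ is trivial on both $\F$ and $\mathcal{P}_{\E}$, this factor is $1$.
\end{itemize}
Multiplying the four factors gives $\gamma(\sfrac12,\pi\times\lambda,\psi_{\E})=1$.

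\textbf{Main obstacle.} The delicate step is the cancellation between $\lambda(c_{\lambda}^{-2}\varpi_{\E}^{2})$ and $\psi_{\E}(2c_{\lambda}\varpi_{\E}^{-1})$. Individually these are not trivial: the unit $u\in 1+\mathcal{P}_{\E}$ produces nontrivial phases $\psi_{\E}(\pm 2c_0y)$. So one must track the decomposition $c_{\lambda}=c_{0}u$ carefully. One must also verify that replacing $c_{\lambda}$ by another representative modulo $\mathcal{P}_{\E}$ does not change the combined expression in Corollary \ref{TD1}. I would first check this directly by substituting $c_{\lambda}\mapsto c_{\lambda}(1+\varpi_{\E}t)$ and confirming that the changes in the $\lambda$-factor and the $\psi_{\E}$-factor cancel.
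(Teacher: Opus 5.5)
Your proposal is correct and follows essentially the paper's argument: you evaluate Corollary \ref{TD1} at $s=\sfrac{1}{2}$ and write $c_{\lambda}=c_{0}(1+\varpi_{\F}y)$ with $c_{0}\in\mathcal{O}_{\F}^{\times}$ via Lemma \ref{LemmaA2}. The key cancellation $\lambda(c_{\lambda}^{-2}\varpi_{\F}^{2})=\psi_{\E}(-2c_{0}y)$ against $\psi_{\E}(2c_{\lambda}\varpi_{\F}^{-1})=\psi_{\E}(2c_{0}y)$ is exactly the paper's, and you spell out the triviality of $\omega(c_{\lambda}^{-1}\varpi_{\E})$ and $\lambda(1-(c_{\lambda}^{2}v)^{-1}\varpi_{\E})=\psi_{\E}(-(c_{\lambda}v)^{-1})$ more explicitly than the paper. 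Your worry about the choice of $c_{\lambda}$ is harmless, since Corollary \ref{TD1} holds for every admissible $c_{\lambda}$ and each such one factors as $c_{0}u$.
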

\begin{proof}
    Let $c_{\lambda} = c_{0}(1 + c_{1}\varpi_{\F})$ for some $c_{0} \in \mathcal{O}_{\F}^{\times}$ and $c_{1} \in \mathcal{O}_{\E}$.~By direct computation, we see that 
    \begin{equation}\label{EQ}
        \lambda\left(c_{\lambda}^{-2}\varpi_{\F}^{2}\left(1 - (c_{\lambda}^{2}v)^{-1}\varpi_{\F}\right)\right)\psi_{\E}\left(2c_{\lambda}\varpi_{\F}^{-1}\right) = 1
    \end{equation}
    as $\lambda\left(c_{\lambda}^{-2}\varpi_{\F}^{2}\right) = \lambda(1 + \varpi_{\F}c_{1})^{-2} = \psi_{\E}(-2c_{0}c_{1})$ and $\psi_{\E}\left(2c_{\lambda}\varpi_{\F}^{-1}\right) = \psi_{\E}(2c_{0}c_{1})$.

Since $\overline{v} \in k_{\F}^{\times}$ and $\omega_{(v,  \, \varphi,  \, \zeta)}$ is trivial on $\F^{\times}$, we have from Corollary \ref{TD1} and (\ref{EQ}) that
\begin{align*}
    &\gamma\left(\sfrac{1}{2}, \, \pi_{(v,  \, \varphi,  \, \zeta)} \times \lambda, \, \psi_{\E}\right) \\
    &= \omega_{(v,  \, \varphi,  \, \zeta)}\left(c_{\lambda}^{-1}\varpi_{\F}\right)\lambda\left(c_{\lambda}^{-2}\varpi_{\F}^{2}\left(1 - (c_{\lambda}^{2}v)^{-1}\varpi_{\F}\right)\right)\psi_{\E}\left(2c_{\lambda}\varpi_{\F}^{-1}\right)q_{\E}^{1-2\left(1/2\right)} \\
    &= 1 \qedhere.
\end{align*}
\end{proof}

\newcommand{\etalchar}[1]{$^{#1}$}

\end{document}